\pdfoutput=1
\RequirePackage{ifpdf}
\ifpdf % We~are running pdfTeX in pdf mode
\documentclass[pdftex]{sigma}
\else
\documentclass{sigma}
\fi

\usepackage{bm}
\usepackage{wrapfig}

\usepackage{bm}
\usepackage[all]{xy}
\usepackage{amsbsy}

\numberwithin{equation}{section}

\newtheorem{Theorem}{Theorem}[section]
\newtheorem*{Theorem*}{Theorem}

\newtheorem{Proposition}[Theorem]{Proposition}
 { \theoremstyle{definition}
\newtheorem{Definition}[Theorem]{Definition}

\newtheorem{Example}[Theorem]{Example}
 }

\newcommand{\beq}{\begin{equation}}
\newcommand{\eeq}{\end{equation}}
\newcommand{\bea}{\begin{eqnarray*}}
\newcommand{\eea}{\end{eqnarray*}}
\newcommand{\beqa}{\begin{eqnarray}}
\newcommand{\eeqa}{\end{eqnarray}}

%%%%%%%%%%%%%%%%%%%%%%%%%%%%%%%%%%%%%%%%%%%%%%%%%%%%%%%%%%%%%%%%%%%%%%
%%%%%%%%%%%%%%%%%%%%%%%% Definition %%%%%%%%%%%%%%%%%%%%%%%%%%%%%%%%%%
%%%%%%%%%%%%%%%%%%%%%%%%%%%%%%%%%%%%%%%%%%%%%%%%%%%%%%%%%%%%%%%%%%%%%%

%\def\bx{\mbox{$x$}}
%\def\bxi{\mbox{$\xi$}}

%\def\bbd{{\bm{d}}}

\def\bR{{\mathbb{R}}}

\def\bZ{{\mathbb{Z}}}

\newcommand{\calE}{{\mathcal E}}

\newcommand{\calL}{{\mathcal L}}
\newcommand{\calM}{{\mathcal M}}
\newcommand{\calN}{{\mathcal N}}
\newcommand{\calO}{{\mathcal O}}

\newcommand{\sbv}[2]{{\{{{#1},{#2}}\}}}

\newcommand{\courant}[2]{{[{{#1},{#2}}]_D}}

\newcommand{\bracket}[2]{\langle #1,\,#2\rangle}

\newcommand{\inner}[2]{{({{#1},{#2}})}}

\newcommand{\proj}{\operatorname{pr}}

\newcommand{\rd}{\mathrm{d}}

\newcommand{\uJ}{{\underline{J}}{}}

\newcommand{\momega}{{\omega}}
\newcommand{\gomega}{{\omega}_{\rm grad}}
\newcommand{\ggomega}{{\widetilde{\omega}}_{\rm grad}}
\newcommand{\tQ}{{\widetilde{Q}}}
\newcommand{\Thetan}{{\widetilde{\Theta}}}
\newcommand{\phiL}{{\phi_{\rm Lie}}}

\newcommand{\hmu}{{\widehat{\mu}}}

\newcommand{\rde}{{{}^E \rd}}

\newcommand{\hrho}{{\widehat{\rho}}}

\newcommand{\QLie}{{Q_{\mathrm{Lie}}}}
\newcommand{\Qadm}{{Q_{\mathrm{comp}}}}

\begin{document}

\allowdisplaybreaks

\newcommand{\arXivNumber}{2302.08193}

\renewcommand{\PaperNumber}{025}

\FirstPageHeading

\ShortArticleName{Compatible $E$-Differential Forms on Lie Algebroids over (Pre-)Multisymplectic Manifoldss}

\ArticleName{Compatible $\boldsymbol{E}$-Differential Forms on Lie Algebroids\\ over (Pre-)Multisymplectic Manifolds}

\Author{Noriaki IKEDA}

\AuthorNameForHeading{N.~Ikeda}

\Address{Department of Mathematical Sciences, Ritsumeikan University, Kusatsu, Shiga 525-8577, Japan}
\Email{\href{mailto:nikeda@se.ritsumei.ac.jp}{nikeda@se.ritsumei.ac.jp}}
\URLaddress{\url{https://www.ritsumei.ac.jp/~nikeda/}}

\ArticleDates{Received November 13, 2023, in final form March 27, 2024; Published online March 31, 2024}

\Abstract{We consider higher generalizations of both a (twisted) Poisson structure and the equivariant condition of a momentum map on a symplectic manifold. On a Lie algebroid over a (pre-)symplectic and (pre-)multisymplectic manifold, we introduce a Lie algebroid differential form called a compatible $E$-$n$-form. This differential form satisfies a~compatibility condition, which is consistent with both the Lie algebroid structure and the (pre-)(multi)symplectic structure. There are many interesting examples such as a~Poisson structure, a twisted Poisson structure and a twisted $R$-Poisson structure for a pre-$n$-plectic manifold. Moreover, momentum maps and momentum sections on symplectic manifolds, homotopy momentum maps and homotopy momentum sections on multisymplectic manifolds have this structure.}

\Keywords{Poisson geometry; Lie algebroid; multisymplectic geometry; higher structures}

\Classification{53D17; 	53D20; 58A50}

\section{Introduction}

A Lie algebroid is a generalization of both a Lie algebra and a space of vector fields on a tangent bundle over a smooth manifold.
It is not only important in differential geometry, especially in Poisson geometry, but also has appeared in physics like a gauge theory, string theory and a~sigma model.

In this paper, we consider a compatible geometric structure
between a section of the exterior algebra of $E^*$ with a Lie algebroid and a (pre-)symplectic and a (pre-)multisymplectic structure on the base manifold $M$.
It is called the \textit{compatible} condition.
Here $E^*$ is a dual space of a~Lie algebroid $E$ over and a (pre-)symplectic and a (pre-)multisymplectic smooth manifold $M$.
A~section of the exterior algebra $\wedge^{\bullet} E^*$ is called an \textit{$E$-differential form}.

A Poisson structure is an important geometric structure inspired
by the Hamiltonian mechanics. A deformation of the Poisson structure by adding
a closed $3$-form $H$ is called the twisted Poisson structure \cite{Klimcik:2001vg, Severa:2001qm}.
We consider a bivector field $\pi \in \Gamma\bigl(\wedge^2 TM\bigr)$ and a closed $3$-form $H \in \Omega^3(M)$ on a smooth manifold $M$.
If $(\pi, H)$ satisfy
\begin{align}
 \frac{1}{2}[\pi, \pi]_S
= \bigl\langle\otimes^{3} \pi,H\bigr\rangle,
\label{tPoisson1intro}
\end{align}
$(\pi, H)$ is called a twisted Poisson structure, where
$[-,-]_S$ is the Schouten bracket on the space of multivector fields and
$\bracket{-}{-}$ is the pairing of $TM$ and $T^*M$.
The condition of the twisted Poisson structure is regarded as a compatibility condition between a Lie algebroid and a pre-$2$-plectic form.
To consider a generalization to general pre-$n$-plectic form is natural. However, a~generalization is not straightforward
since a simple generalization of $\pi$ and $H$ in equation~\eqref{tPoisson1intro} to a multivector field
and a pre-$n$-plectic form does not give a good condition.

A twisted Poisson structure was discovered in the analysis of a sigma model called a twisted Poisson (or a WZ Poisson) sigma model \cite{Klimcik:2001vg}.
Recently, Chatzistavrakidis proposed a new higher dimensional topological sigma model with WZ term whose target space was a~Poisson mani\-fold~\cite{Chatzistavrakidis:2021nom, Chatzistavrakidis:2022qhf}.
If and only if a multivector field on the target space is satisfied a geometric condition, the topological sigma model is consistent.
A generalization to general Lie algebroid setting
is formulated in the paper \cite{Ikeda:2021rir}.
Consistency conditions of these sigma models
imposes a geometric condition of an $E$-differential form with
a Lie algebroid structure and a~(pre-)(multi)symplectic form.
In this paper, we investigate this geometric structure mathematically.

Another motivation is a generalization of a momentum map
on a symplectic manifold. A~momentum map is defined on a symplectic manifold
$M$ with a Lie group $G$ action.
Let $\mathfrak{g}$ be a~Lie algebra of the Lie group $G$.
Recently, a generalization of a momentum map theory to a Lie algebroid setting
has been proposed \cite{Blohmann:2018} inspired by analysis in
\cite{Blohmann:2010jd, Kotov:2016lpx}.
A momentum map is~${\mu\colon M \rightarrow \mathfrak{g}^*}$ is generalized to
a section of the dual of a vector bundle $\mu \in \Gamma(E^*)$ called
a momentum section, where $E$ is a Lie algebroid.
If a vector bundle is the direct product $E = M \times \mathfrak{g}$,
it reduces to a momentum map.

Conditions of the momentum map
are generalized.
In this paper, we concentrate on one compatibility condition called
the bracket-compatibility condition, \[{}^E \rd \mu
= - \iota_{\rho}^{2}{\omega},\] where $\omega$ is a~symplectic form, $\rho$ is the anchor map of a Lie algebroid, and ${}^E \rd$ is the Lie algebroid differential.
This condition looks to be curious at first, however it has natural
good properties.
Moreover we claim that the condition is
unified with the twisted Poisson structure,
and generalized to higher algebroids and multisymplectic settings.
A generalization of the momentum section to a~(pre)-multisymplectic manifold
called a homotopy momentum section has be proposed in \cite{Hirota:2021isx}. Our condition for an $E$-$n$-form in this paper is consistent with the definition of the homotopy momentum section.

A twisted Poisson structure is known as an example of Dirac structures \cite{Severa:2001qm}.
A momentum map on a symplectic manifold and a homotopy momentum map \cite{Fregier:2013dda} on a multisymplectic manifold have reinterpretations in terms of higher algebroids \cite{MitiZambon}.
We show that a compatible $E$-$n$-form is an example of a higher Dirac structure.
Moreover, a compatible $E$-$n$-form can be formulated as a Q-manifold
and a QP-manifold. These descriptions are connected to constructions of topological sigma models. One of constructions is so called AKSZ sigma models \cite{Alexandrov:1995kv, Ikeda:2012pv, Roytenberg:2006qz}.

We claim that the above structure is interesting and worth to analyze, since
it has many examples already listed above.
Generalizations of momentum maps such as momentum sections on a Lie algebroid,
homotopy momentum maps \cite{Fregier:2013dda} and homotopy momentum sections \cite{Hirota:2021isx}
on multisymplectic manifolds are examples of our geometric structure.

This paper is organized as follows.
In Section \ref{sec2}, a Lie algebroid and related notion are prepared and
notation is fixed.
In Section~\ref{sec:}, a compatible condition is defined and basic properties are analyzed.
In Section~\ref{sec:Exam}, examples are listed.
In Section~\ref{HDirac}, the interpretation in terms of the higher Dirac structure is discussed.
In Section~\ref{Qmfd}, the compatible condition is formulated as a~Q-manifold.
In Section~\ref{Qmfdtwist}, we reformulate the compatible condition
as a (twisted) QP-manifold.
%In the appendix, some formulas are summarized.

\section{Preliminary}\label{sec2}
In this section, we explain the background geometry of this paper.
A Lie algebroid and Lie algebroid differentials are introduced.

\subsection{Lie algebroids}

\begin{Definition}
Let $E$ be a vector bundle over a smooth manifold $M$.
A Lie algebroid $(E, \rho, [-,-])$ is a vector bundle $E$ with
a bundle map $\rho\colon E \rightarrow TM$ called the anchor map,
and a Lie bracket
$[-,-]\colon \Gamma(E) \times \Gamma(E) \rightarrow \Gamma(E)$
satisfying the Leibniz rule,
\[
[e_1, fe_2] = f [e_1, e_2] + \rho(e_1) f \cdot e_2,
\]
where $e_i \in \Gamma(E)$ and $f \in C^{\infty}(M)$.
\end{Definition}
A Lie algebroid is a generalization of a Lie algebra and the space of vector fields on a smooth manifold.
\begin{Example}[Lie algebras]
Let a manifold $M$ be one point $M = \{pt \}$.
Then a Lie algebroid is a Lie algebra $\mathfrak{g}$.
\end{Example}
\begin{Example}[tangent Lie algebroids]\label{tangentLA}
If a vector bundle $E$ is a tangent bundle $TM$ and $\rho = \mathrm{id}$,
then a bracket $[-,-]$ is a normal Lie bracket
on the space of vector fields $\mathfrak{X}(M)$
and $(TM, \mathrm{id}, [-,-])$ is a Lie algebroid.
It is called a \textit{tangent Lie algebroid}.
\end{Example}

\begin{Example}[action Lie algebroids]\label{actionLA}
Suppose that there is a smooth action of a Lie group~$G$
to a smooth manifold $M$,
$M \times G \rightarrow M$.
The differential of the map induces an infinitesimal action of the Lie algebra $\mathfrak{g}$ of $G$ on a manifold $M$.
Since $\mathfrak{g}$ acts as a differential operator on $M$,
the differential of the map determines a bundle map $\rho\colon M \times \mathfrak{g} \rightarrow TM$.
Consistency of a Lie bracket requires that $\rho$ is
a Lie algebra morphism such that
\begin{gather}
[\rho(e_1), \rho(e_2)] = \rho([e_1, e_2]),\label{almostLA}
\end{gather}
where the bracket in left-hand side
of \eqref{almostLA} is a Lie bracket of vector fields.
These data gives a~Lie algebroid $(E= M \times \mathfrak{g}, \rho, [-,-])$.
This Lie algebroid is called an \textit{action Lie algebroid}.
\end{Example}

\begin{Example}[Poisson Lie algebroids]\label{Poisson}
A bivector field $\pi \in \Gamma\bigl(\wedge^2 TM\bigr)$ is called a Poisson bivector field if $[\pi, \pi]_S =0$, where $[-,-]_S$ is a Schouten bracket on the space of multivector fields, $\Gamma(\wedge^{\bullet} TM)$.
If a smooth manifold $M$ has a Poisson bivector field $\pi$, $(M, \pi)$ is called a Poisson manifold.
For a Poisson bivector field $\pi$, a bundle map is defined as
$\rho= -\pi^{\sharp}\colon T^*M \rightarrow TM$ by~${\bracket{\pi^{\sharp}(\alpha)}{\beta}
= \pi(\alpha, \beta)}$ for all $\beta \in \Omega^1(M)$.

Let $(M, \pi)$ be a Poisson manifold. Then, a Lie algebroid structure is introduced on $T^*M$.
$\pi^{\sharp}\colon T^*M \rightarrow TM$ is the anchor map, and
a Lie bracket on $\Omega^1(M)$ is defined by the so called Koszul bracket,
\begin{align}
[\alpha, \beta]_{\pi} = \calL_{\pi^{\sharp} (\alpha)}\beta - \calL_{\pi^{\sharp} (\beta)} \alpha - \rd(\pi(\alpha, \beta)),
\label{Koszulbracket}
\end{align}
where $\alpha, \beta \in \Omega^1(M)$.
\end{Example}

\begin{Example}[Lie algebroids induced from twisted Poisson structures]\label{twistedP}
If a bivector field $\pi \in \Gamma\bigl(\wedge^2 TM\bigr)$ and
$H \in \Omega^3(M)$ satisfy
\begin{gather}\label{tPoisson1}
 \frac{1}{2}[\pi, \pi]_S
= \bigl\langle\otimes^{3} \pi,H\bigr\rangle,
\qquad \rd H =0,
\end{gather}
$(M, \pi, H)$ is called a twisted Poisson manifold \cite{Ikeda:2019czt,Klimcik:2001vg, Park:2000au, Severa:2001qm}.
Here $\bigl\langle\otimes^{3} \pi,H\bigr\rangle$ is defined by
$\bigl\langle\otimes^{3} \pi,H\bigr\rangle(\alpha_1, \alpha_2, \alpha_3)
:= H\bigl(\pi^{\sharp} (\alpha_1), \pi^{\sharp} (\alpha_2), \pi^{\sharp} (\alpha_3)\bigr)
$ for $\alpha_i \in \Omega^1(M)$.

If we take the same bundle map, $-\pi^{\sharp}\colon T^*M \rightarrow TM$
as in Example \ref{Poisson},
and a Lie bracket deformed by $H$,
\[
[\alpha, \beta]_{\pi,H} = \calL_{\pi^{\sharp} (\alpha)}\beta - \calL_{\pi^{\sharp} (\beta)} \alpha - \rd(\pi(\alpha, \beta))
+ \iota_{\pi^{\sharp}(\alpha)} \iota_{\pi^{\sharp}(\beta)} H
\]
for $\alpha, \beta \in \Omega^1(M)$.
Then, $\bigl(T^*M, -\pi^{\sharp}, [-, -]_{\pi, H}\bigr)$ is a Lie algebroid.
\end{Example}
One can refer to proper documents for basic properties of Lie algebroids can be found, for instance, in \cite{Mackenzie}.

\subsection{Lie algebroid differentials}\label{LAdiffandconn}
For a Lie algebroid $E$, sections of the exterior algebra of $E^*$ are called
\textit{$E$-differential forms}.
Consider the spaces of $E$-differential forms $\Gamma(\wedge^{\bullet} E^*)$.
A differential ${}^E \rd$ on $\Gamma(\wedge^{\bullet} E^*)$ is
called a~\textit{Lie algebroid differential} (or simply, a differential)
${}^E \rd\colon \Gamma(\wedge^m E^*)
\rightarrow \Gamma\bigl(\wedge^{m+1} E^*\bigr)$ and
is defined as follows.
%The complex $(\Gamma(\wedge^{\bullet} E^*), {}^E \rd)$
\begin{Definition}
A Lie algebroid differential ${}^E \rd\colon \Gamma(\wedge^m E^*)
\rightarrow \Gamma\bigl(\wedge^{m+1} E^*\bigr)$ is defined by
\begin{align*}%\label{LAdifferential}
{}^E \rd \alpha(e_1, \dots, e_{m+1})
={}&\sum_{i=1}^{m+1} (-1)^{i-1} \rho(e_i) \alpha(e_1, \dots,
\check{e_i}, \dots, e_{m+1})
 \\ &
+ \sum_{1 \leq i < j \leq m+1} (-1)^{i+j} \alpha([e_i, e_j], e_1, \dots, \check{e_i}, \dots, \check{e_j}, \dots, e_{m+1}),
\end{align*}
where $\alpha \in \Gamma(\wedge^m E^*)$ and $e_i \in \Gamma(E)$.
\end{Definition}
$\bigl({}^E \rd\bigr)^2=0$ is satisfied.
The Lie algebroid differential ${}^E \rd$ is a generalization of the de Rham differential on $T^*M$ and the Chevalley--Eilenberg differential on a Lie algebra.

\section[Compatible E-n-forms]{Compatible $\boldsymbol{E}$-$\boldsymbol{n}$-forms}\label{sec:}
\subsection{Definition and consistency}

In this section, we define a geometric structure on a Lie algebroid
over a pre-multisymplectic manifold.
First, we introduce a pre-multisymplectic manifold.
\begin{Definition}
A closed $(n+1)$-form $\omega \in \Omega^{n+1}(M)$ is called
a pre-$n$-plectic form.
$(M, \omega)$ is called a pre-$n$-plectic manifold.
\end{Definition}

If we additionally suppose that $\omega$ is nondegenerate,
$\omega$ is called an $n$-plectic form.
A $1$-plectic form is a symplectic form.

Let $\rde\colon \Gamma(\wedge^m E^*) \rightarrow \Gamma\bigl(\wedge^{m+1} E^*\bigr)$ be a Lie algebroid differential.
Given $\omega \in \Omega^{n+1}(M)$,
${\iota_{\rho}^k}$ is
defined by
\begin{align*}
\iota_{\rho}^{k} \momega(v_{k+1}, \dots, v_{n+1}) (e_1, \dots, e_k)
={}&
\iota_{\rho(e_1)} \dots \iota_{\rho(e_k)} \momega(v_{k+1}, \dots, v_{n+1})
 \\
={}& \bigl\langle\otimes^{k}\rho,\momega\bigr\rangle
(e_k, \dots, e_1, v_{k+1}, \dots, v_{n+1})
 \\
:={}&
\momega(\rho(e_k), \dots, \rho(e_1), v_{k+1}, \dots, v_{n+1})
\end{align*}
for $e_1, \dots, e_k \in \Gamma(E)$ and
$v_{k+1}, \dots, v_{n+1} \in \mathfrak{X}(M)$.

\begin{Definition}\label{bracomp}
Let $(M, \omega)$ be a pre-$n$-plectic manifold
and $(E, \rho, [-,-])$ be a Lie algebroid over $M$.
Then, an $E$-$n$-form $J \in \Gamma(\wedge^n E^*)$ is called
\textit{compatible} if $J$ satisfies
\begin{gather}
 {}^E \rd J
= - \iota_{\rho}^{n+1} \omega
= - \bigl\langle\otimes^{n+1} \rho,\omega\bigr\rangle.
\label{momsec1}
\end{gather}
\end{Definition}

Since $\rde^2=0$,
\begin{align}
\rde \bigl(\iota^{n+1}_{\rho} \omega\bigr) = 0,
\label{consitency}
\end{align}
must be satisfied for consistency.
The left-hand side is
\begin{align*}%\label{consitency2}
& \rde \bigl(\iota^{n+1}_{\rho} \omega\bigr)(e_{n+2}, \dots, e_1) =
\rd \omega(\rho(e_1), \dots, \rho(e_{n+2}))\\ &\phantom{\qquad=}{}
+ \sum_{1 \leq i < j \leq n+2} (-1)^{i+j-1}
\omega([\rho(e_i), \rho(e_j)], \rho(e_1), \dots,
\check{\rho(e_i)}, \dots, \check{\rho(e_j)}, \dots, \rho(e_{n+2})) \\ &\phantom{\qquad=}{}
+ \sum_{1 \leq i < j \leq n+2} (-1)^{i+j} \omega(\rho([e_i, e_j]), \rho(e_1), \dots, \check{\rho(e_i)}, \dots, \check{\rho(e_j)}, \dots, \rho(e_{n+2}))
\end{align*}
for $e_i \in \Gamma(E)$.
Since $[\rho(e_1), \rho(e_2)] = \rho([e_1, e_2])$ is satisfied in a Lie algebroid $E$, we obtain the following proposition.
\begin{Proposition}\label{consitency1}
If $\omega$ is closed, \eqref{consitency} is satisfied and equation \eqref{momsec1} is consistent with $\rho$, $[-,-]$ and $\omega$.
\end{Proposition}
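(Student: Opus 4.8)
The plan is to exploit the explicit expansion of $\rde\bigl(\iota^{n+1}_{\rho}\omega\bigr)$ already displayed above, which splits the left-hand side of \eqref{consitency} into three pieces: a de Rham term $\rd\omega(\rho(e_1),\dots,\rho(e_{n+2}))$ arising from the anchor terms in the definition of $\rde$, together with two sums indexed by $1\le i<j\le n+2$ carrying the signs $(-1)^{i+j-1}$ and $(-1)^{i+j}$ respectively. I would establish the vanishing of each of these three pieces separately, after which the conclusion is immediate.

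First, the de Rham term vanishes under the hypothesis that $\omega$ is closed, since it is literally $\rd\omega$ evaluated on the pushed-forward vectors $\rho(e_1),\dots,\rho(e_{n+2})$, and $\rd\omega=0$ annihilates it irrespective of the arguments. Next, I would compare the two remaining sums term by term. For each fixed pair $i<j$ the summand in the first sum carries the argument $[\rho(e_i),\rho(e_j)]$ in the slot preceding the hatted list, while the summand in the second sum carries $\rho([e_i,e_j])$ in the same slot with all other entries identical. The Lie algebroid morphism property $[\rho(e_i),\rho(e_j)]=\rho([e_i,e_j])$, valid by \eqref{almostLA}, makes these two arguments coincide, so the paired summands differ only by their sign factors. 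Because $(-1)^{i+j-1}=-(-1)^{i+j}$, the two contributions are negatives of one another and cancel in pairs, so both sums together contribute zero.

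The main obstacle here is bookkeeping rather than conceptual: one must verify that for each index pair the surviving arguments in the two sums genuinely agree slot-for-slot, including the ordering of the hatted entries, so that the cancellation is exact. Once this alignment is confirmed, combining the three vanishing statements yields $\rde\bigl(\iota^{n+1}_{\rho}\omega\bigr)=0$, which is precisely \eqref{consitency} and therefore guarantees that the defining equation \eqref{momsec1} is consistent with $\rho$, $[-,-]$ and $\omega$.
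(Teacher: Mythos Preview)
Your proposal is correct and follows exactly the argument the paper gives: use the displayed three-term expansion of $\rde\bigl(\iota_\rho^{n+1}\omega\bigr)$, kill the $\rd\omega$ term by closedness, and cancel the two bracket sums pairwise via $[\rho(e_i),\rho(e_j)]=\rho([e_i,e_j])$ together with the sign flip $(-1)^{i+j-1}=-(-1)^{i+j}$. There is nothing to add.
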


In other words, Definition \ref{bracomp} means that
if $- \iota_{\rho}^{n+1} \omega$ is ${}^E \rd$-exact,
and its $E$-$n$-form is $J$, $(J, \omega)$ is called compatible.

\subsection{Gauge transformations}
Recall that $\iota^{n+1}_{\rho} \omega$ is closed with
respect to the Lie algebroid differential $\rde$,
$\rde \bigl(\iota^{n+1}_{\rho} \omega\bigr) = 0$ from consistency with
equations \eqref{momsec1} and \eqref{consitency}.
Obvious equivalence relation of compatible $E$-$n$-forms is
difference of ${}^E \rd$-exact terms.
%$J' = J + {}^E \rd K$
\begin{Definition}\label{bracomp2}
For a Lie algebroid $E$, two compatible $E$-$n$-forms $J$ and $J'$ are
equivalent if there exists an $E$-$(n-1)$-form $K \in \Gamma\bigl(\wedge^{n-1} E^*\bigr)$
 satisfying $J^{\prime} = J + {}^E \rd K$.
\end{Definition}
In fact, if $J$ is a compatible $E$-$n$-form $J^{\prime}$ also satisfies the same compatible condition.

We consider another nontrivial equivalence.
For a pre-$n$-plectic form $\omega$, we consider a deformation
$\omega^{\prime} = \omega + \rd \lambda$ for an arbitrary $n$ form $\lambda \in \Omega^{n}(M)$, which is called a \textit{gauge transformation}.
Obviously, if $\omega$ is a pre-$n$-plectic form, $\omega^{\prime}$ is also
a pre-$n$-plectic form.

Using the identity $[\rho(e_1), \rho(e_2)] = \rho([e_1, e_2])$ of a Lie algebroid, the following equation holds,
\[ \rde \bigl(\iota_{\rho}^n \lambda\bigr) = \iota_{\rho}^{n+1} \rd \lambda
\]
for $\lambda \in \Omega^{n}(M)$.
Thus, if $J$ is a compatible $E$-$n$-form for $\omega$,
and $\omega^{\prime} = \omega -\rd \lambda$,
the compatible $E$-$n$-form $J^{\prime}$ for $\omega^{\prime}$
is given by $J^{\prime} = J - \iota_{\rho}^n \lambda$.
In fact,
\begin{gather*}
\rde J^{\prime}
= - \iota_{\rho}^{n+1} \omega^{\prime}
= - \iota_{\rho}^{n+1} \omega - (\iota_{\rho})^{n+1} \rd \lambda
= - \iota_{\rho}^{n+1} \omega - \rde \bigl(\iota_{\rho}^n \lambda\bigr)=\rde\bigl(J - \iota_{\rho}^n \lambda\bigr).
\end{gather*}
A gauge equivalence of two compatible $E$-$n$-forms is formulated as follows.
\begin{Definition}\label{equivalentEnform}
Two compatible $E$-$n$-forms $J$ and $J^{\prime}$ are \textit{gauge equivalent} if there exists an $n$-form $\lambda \in \Omega^{n}(M)$ such that
$J^{\prime} = J - \iota_{\rho}^n \lambda$.
\end{Definition}
In Section~\ref{sec:Exam}, we discuss gauge equivalence of
the twisted Poisson structure in Example~\ref{tPoissonExample}.

\section{Examples}\label{sec:Exam}

The compatible $E$-$n$-form \eqref{momsec1}
is considered as a universal compatibility condition
of a Lie algebroid structure with a pre-multisymplectic form.
It appears in various geometries. In this section, we list up examples.

\begin{Example}[Liouville $1$-forms]
Take $n=1$ and let $(M, \omega)$ be a symplectic manifold and $E=TM$ be a tangent bundle.
$TM$ is a tangent Lie algebroid with the anchor map $\rho = \mathrm{id}$
(see Example \ref{tangentLA}).
An $E$-$1$-form $J$ is a normal differential form on $M$,
and a Lie algebroid differential ${}^E \rd = \rd_{\mathrm{dR}}$
is the de Rham differential.
The compatible condition \eqref{momsec1} is
\[ \rd_{\mathrm{dR}} J = - \omega,
\]
which means that $J$ is the Liouville $1$-form for the symplectic form $\omega$.\end{Example}

\begin{Example}[Poisson and twisted Poisson structures]\label{tPoissonExample}
Let $(\pi, H)$ be a twisted Poisson structure on $M$ (see Example \ref{twistedP}).
Note that it reduces to a Poisson structure if $H=0$.
The cotangent bundle $T^*M$ has a Lie algebroid structure
as explained in Examples \ref{Poisson} and \ref{twistedP}.
Formulas of the twisted Poisson structures and induced Lie algebroids are in
\cite{Kosmann-Schwarzbach:2005evu,Severa:2001qm}.
Equation~\eqref{tPoisson1} is equivalent to ${}^E \rd \pi = - \bigl\langle\otimes^{3} \pi,H\bigr\rangle$,
where ${}^E \rd$ is the Lie algebroid differential of this Lie algebroid
$T^*M$.
Thus, $J= \pi$ is a compatible $TM$-$2$-form on a pre-$2$-plectic manifold
with a~pre-$2$-plectic form $H$.

Next let us analyze gauge equivalence.
A gauge transformation of the twisted Poisson structure is
$H^{\prime} = H + \rd B$ with a $2$-form $B$.
Then, the compatible $E$-$2$-form with respect to $H^{\prime}$ is~$\pi^{\prime} = \pi - \bigl\langle\otimes^2 \pi,B\bigr\rangle$
from Definition \ref{equivalentEnform}.

Note that gauge transformations in Definition \ref{equivalentEnform} are
different from the definition of the gauge transformation in the twisted Poisson structure \cite{BursztynRadko,Severa:2001qm}.
In the standard definition, the bivector field changes to
$\pi^{\prime} = \pi (1 + \pi B)^{-1}$ under $H^{\prime} = H + \rd B$.

In the standard definition, equation \eqref{tPoisson1} gives the Lie algebroid structure on $T^*M$ and a~compatible $E$-$n$-form simultaneously.
If the gauge transformation $\pi^{\prime} = \pi (1 + \pi B)^{-1}$ consistently
deforms the Lie algebroid structure defined by $(\pi^{\prime}, H)$ and a compatible $E$-$n$-form to~${J^{\prime} = \pi^{\prime}}$ simultaneously.

However, our gauge transformation $\pi^{\prime} = \pi - \bigl\langle\otimes^2 \pi,B\bigr\rangle$ only deforms the compatible $E$-$n$-form to $J^{\prime} = \pi^{\prime}$
under the fixed Lie algebroid constructed from $(\pi, H)$.
The reason is for generalizations to higher pre-$n$-plectic cases. In a pre-$n$-plectic case, the simultaneous deformation of the Lie algebroid and the compatible $E$-$n$-form like the twisted Poisson structure does not work.
The twisted Poisson structure is a very special case and
our gauge transformation is applicable to general $E$-$n$-form on a higher $n$-plectic manifold.
\end{Example}

\begin{Example} %[twisted $R$-Poisson structures]
A twisted $R$-Poisson structure $(\pi, H, J)$ on a~smooth manifold~$M$
is defined as follows \cite{Chatzistavrakidis:2021nom}.\footnote{Since $J$ is denoted by $R$ in the paper \cite{Chatzistavrakidis:2021nom},
it is called the $R$-Poisson structure.}
$M$ is a smooth manifold and $\pi \in \Gamma\bigl(\wedge^2 TM\bigr)$ is a Poisson bivector field.
 $H$ is a~closed $(n+1)$-form, and $J \in \Gamma(\wedge^n TM)$ is an $n$-multivector field.
As explained in Example~\ref{Poisson},
the Poisson bivector field $\pi$ induces a Lie algebroid structure
on $T^*M$, where $\rho = - \pi^{\sharp}$ and the Lie bracket is given by~\eqref{Koszulbracket}.
Under this Lie algebroid structure,
$(H, J)$ is the twisted $R$-Poisson structure if it satisfies
\[%\label{RPoisson}
{}^E \rd J = (- 1)^n \bigl\langle\otimes^{n+1} \pi,H\bigr\rangle.
\]
$J$ is a compatible $T^*M$-$n$-form with the pre-$(n+1)$-plectic form $\omega = (- 1)^{n+1} H$.
\end{Example}

\begin{Example}[momentum maps]
We assume a smooth action of a Lie group $G$ on a symplectic manifold $(M, \omega)$. Let $\mathfrak{g}$ be a Lie algebra for $G$.
The Lie group action induces the Lie algebra action
$\rho\colon\mathfrak{g} \rightarrow TM$.
A map $\mu\colon M \rightarrow \mathfrak{g}^*$ is called a momentum map if it satisfies
\begin{gather}
\rd \mu(e) = - \iota_{\rho(e)} \omega,\label{mommap01}
\\
\rho(e_1) \bracket{\mu}{e_2} = \bracket{\mu}{[e_1, e_2]}\label{mommap02}
\end{gather}
for $e, e_1, e_2 \in \mathfrak{g}$.
Here $\bracket{-}{-}$ is the pairing of $\mathfrak{g}^*$ and $\mathfrak{g}$.
As explained in Example \ref{actionLA}, the Lie group action induces
an action Lie algebroid structure on a trivial bundle
$E = M \times \mathfrak{g}$.
The anchor map is induced from the map of the Lie algebra action
as $\rho\colon M \times \mathfrak{g} \rightarrow TM$.
The momentum map is regarded as a section of the dual trivial bundle
$\mu \in \Gamma(M \times \mathfrak{g}^*)$.
Under the condition \eqref{mommap01}, equation \eqref{mommap02} is changed to
\cite{Blohmann:2018}
\[%\label{mommap03}
{}^E \rd \mu(e_1, e_2) = - \iota_{\rho}^2 \omega(e_1, e_2),
\]
i.e., ${}^E \rd \mu = - \iota_{\rho}^2 \omega$.
Here ${}^E \rd$ is a Lie algebroid differential with respect to the action Lie algebroid. Thus we obtain the following result.
\begin{Proposition}
A momentum map $\mu\colon M \rightarrow \mathfrak{g}^*$ is a compatible $E$-$1$-form for the action Lie algebroid induced from a Lie group action on $M$.
\end{Proposition}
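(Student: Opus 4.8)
The plan is to verify the defining equation \eqref{momsec1} directly in the case $n=1$, taking $J=\mu$ regarded as a section of $\wedge^1 E^* = E^* = M\times\mathfrak{g}^*$. First I would unpack the action Lie algebroid of Example~\ref{actionLA}: sections are identified with $\mathfrak{g}$-valued functions, the anchor $\rho$ is the infinitesimal action, and the bracket is inherited from $\mathfrak{g}$, so that $\rho([e_1,e_2])=[\rho(e_1),\rho(e_2)]$ as in \eqref{almostLA}. For an $E$-$1$-form the Definition of the Lie algebroid differential reduces to
\[
{}^E\rd\mu(e_1,e_2)=\rho(e_1)\bracket{\mu}{e_2}-\rho(e_2)\bracket{\mu}{e_1}-\bracket{\mu}{[e_1,e_2]},
\]
which is the only structural input I need on the left-hand side.

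Next I would rewrite the two derivative terms using the bracket-compatibility condition \eqref{mommap01}. Evaluating the one-form identity $\rd\mu(e')=-\iota_{\rho(e')}\omega$ on the vector field $\rho(e)$ gives $\rho(e)\bracket{\mu}{e'}=\omega(\rho(e),\rho(e'))$; applying this to the two orderings and using antisymmetry of $\omega$ turns the first two terms into $2\,\omega(\rho(e_1),\rho(e_2))$. I would then feed in the equivariance condition \eqref{mommap02}, which under \eqref{mommap01} becomes $\bracket{\mu}{[e_1,e_2]}=\omega(\rho(e_1),\rho(e_2))$, so that one copy cancels and ${}^E\rd\mu(e_1,e_2)=\omega(\rho(e_1),\rho(e_2))$. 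Comparing with the right-hand side of \eqref{momsec1}: by the definition of $\iota_\rho^k$ with $k=2$ and no surviving vector arguments, $-\iota_\rho^2\omega(e_1,e_2)=-\omega(\rho(e_2),\rho(e_1))=\omega(\rho(e_1),\rho(e_2))$, so both sides agree and $\mu$ is a compatible $E$-$1$-form.

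The computation is essentially bookkeeping, so I do not expect a genuine obstacle; the one place demanding care is the sign and ordering conventions, namely the minus sign in \eqref{mommap01} and the reversed argument order $\rho(e_k),\dots,\rho(e_1)$ built into the definition of $\iota_\rho^2\omega$. The delicate point is confirming that the factor of $2$ generated by the two derivative terms is cancelled to exactly one copy of $\omega(\rho(e_1),\rho(e_2))$ by \eqref{mommap02}, rather than to $0$ or a spurious sign; getting this consistent is what pins down that the compatible condition reproduces \eqref{mommap02} and not some other normalization.
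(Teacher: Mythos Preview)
Your proposal is correct and follows essentially the same route as the paper: both arguments use \eqref{mommap01} to convert the anchor-derivative terms in the Lie algebroid differential into $\omega(\rho(e_1),\rho(e_2))$ and then invoke \eqref{mommap02} to identify the bracket term, arriving at ${}^E\rd\mu=-\iota_\rho^2\omega$. The paper states this reduction in one line, whereas you spell out the cancellation of the factor of $2$ and the sign/ordering conventions explicitly; the content is the same.
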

\end{Example}

\begin{Example}[homotopy moment(um) maps]\label{hmm}\cite{Fregier:2013dda}
Let $(M, \momega)$ be an $n$-plectic manifold.
Assume an action of a Lie group $G$ on $M$ as in the case of the momentum map.
%The Lie algebra of $G$ is denoted by $\mathfrak{g}$.
The action of $G$ induces the corresponding infinitesimal Lie algebra action on $M$ as vector fields, $\rho\colon\mathfrak{g} \rightarrow TM$.
We consider the space,
\smash{$\Omega^{k}\bigl(M, \wedge^{n-k} \mathfrak{g}^*\bigr) =
\Omega^{k}(M) \otimes \wedge^{n-k} \mathfrak{g}^*$},
and two differentials~${\rd := \rd \otimes 1}$ and~${\rd_{\mathrm{CE}} := 1 \otimes \rd_{\mathrm{CE}}}$
where $\rd$ is the de Rham differential on $\Omega^{k}(M)$ and
$\rd_{CE}$ is the Chevalley--Eilenberg differential on $\wedge^{n-k} \mathfrak{g}^*$.
Introduce a $k$-form taking a value in $\wedge^{n-k} \mathfrak{g}^*$,
\smash{$\hmu_k \in \Omega^{k}\bigl(M, \wedge^{n-k} \mathfrak{g}^*\bigr)$},
where $k=0, \dots, n-1$,
and their formal sum, $\hmu = \sum_{k=0}^{n-1} \hmu_k$.
\begin{Definition}
A homotopy momentum map $\hmu$ is defined by\footnote{Sign factors of the equation are different from \cite{Fregier:2013dda}. Sign factors are arranged to original ones by proper redefinitions.}
\begin{gather}
 (\rd + \rd_{CE}) \hmu = \sum_{k=0}^{n} (-1)^{n-k+1} \iota_{\rho}^{n+1-k} \momega.
\label{homotopyMM}
\end{gather}
\end{Definition}
A momentum map on an $n$-plectic manifold \cite{Carinena:1992rb, Gotay:1997eg}
is a homotopy momentum map.
Expanded by the form degree,
equation \eqref{homotopyMM} becomes the following $n$ equations,
\begin{gather}%\label{homotopyMM2}\label{homotopyMM3}
 \rd \hmu_{n-1} = - \iota_{\rho}^1 \momega,\nonumber
%\label{homotopyMM1}
\\
 \rd \hmu_{n-2} + \rd_{CE} \hmu_{n-1}= \iota_{\rho}^{2} \momega,
\nonumber
\\
 \vdots
\nonumber
\\
 \rd \hmu_{k-1} + \rd_{CE} \hmu_{k}= (-1)^{n-k+1} \iota_{\rho}^{n+1-k} \momega,
\nonumber
\\
 \vdots
\nonumber
\\
 \rd_{CE} \hmu_{0} = (-1)^{n+1} \iota_{\rho}^{n+1} \momega.
\label{homotopyMM4}
\end{gather}

The Lie group action induces an action Lie algebroid structure on
the trivial bundle ${E \!= \!M \!\times \!\mathfrak{g}}$.
The Lie algebroid differential on the action Lie algebroid is
${}^E \rd = \operatorname{ad}^*_{\rho} + \rd_{CE}$,
where $\operatorname{ad}^*$ is the following action on $\alpha$ induced from the Lie algebra action on $M$,
\[
\operatorname{ad}^*_{\rho} \alpha(e_1, \dots, e_{m+1})
:=
\sum_{i=1}^{m+1} (-1)^{i-1} \rho(e_i) \alpha(e_1, \dots,
\check{e_i}, \dots, e_{m+1})
\]
for $e_i \in \mathfrak{g}$,
We can rewrite $n$ equations
\eqref{homotopyMM4}
to equivalent $n$ equations \cite{Hirota:2021isx},
\begin{gather}
 \rd \mu_{n-1} = - \iota_{\rho}^1 \momega,\nonumber
%\label{homotopyMS21}
\\
 \rd \mu_{n-2} + {}^E \rd \mu_{n-1}= - \iota_{\rho}^{2} \momega,\nonumber
%\label{homotopyMS22}
\\
 \vdots
\nonumber \\
 \rd \mu_{k-1} + {}^E \rd \mu_{k}= - \iota_{\rho}^{n+1-k} \momega,\nonumber
%\label{homotopyMS23}
\\
 \vdots
\nonumber \\
 {}^E \rd \mu_{0} = - \iota_{\rho}^{n+1} \momega,
\label{homotopyMS24}
\end{gather}
where $\hmu_k = (-1)^{n-k+1} \mu_k$.
The last equation \eqref{homotopyMS24} shows that $J=\mu_0$ is
a compatible $E$-$n$-form.
\begin{Proposition}
$\hmu_0 = (-1)^{n+1} \mu_0$ in a homotopy momentum map is a compatible $E$-$n$-form.
\end{Proposition}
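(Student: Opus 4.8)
The plan is to obtain the statement as the bottom rung of the descent tower that defines a homotopy momentum map, after translating it into action Lie algebroid language. First I would read the defining equation \eqref{homotopyMM} not as a single identity but as an equation in the bicomplex $\Omega^\bullet(M)\otimes\wedge^\bullet\mathfrak{g}^*$, bigraded by de Rham degree and Chevalley--Eilenberg degree. Since $\hmu$ lies in total degree $n$ while the right-hand side lies in total degree $n+1$, projecting onto the bidegree $(k,n+1-k)$ components reproduces exactly the equations \eqref{homotopyMM4}.

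Next I would bring in the action Lie algebroid structure on $E=M\times\mathfrak{g}$, where $\Gamma(\wedge^m E^*)=C^\infty(M)\otimes\wedge^m\mathfrak{g}^*$ and the Lie algebroid differential splits as $\rde=\operatorname{ad}^*_\rho+\rd_{CE}$. Rescaling the components by $\hmu_k=(-1)^{n-k+1}\mu_k$ and reorganizing the tower converts \eqref{homotopyMM4} into the equivalent system \eqref{homotopyMS24}, in which every $\rd_{CE}$ has been promoted to the full $\rde$; this reorganization is the one performed in \cite{Hirota:2021isx}. The mechanism behind it is Cartan's formula $\calL_{\rho(e)}=\iota_{\rho(e)}\rd+\rd\iota_{\rho(e)}$ together with $\rd\momega=0$: these let the $\operatorname{ad}^*_\rho$ half of $\rde$ at each level be matched against de Rham contractions of $\momega$ and shuffled between adjacent rungs, with the signs absorbed consistently.

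The payoff is the bidegree $(0,n+1)$ rung. Here there is no $\mu_{-1}$ term, so \eqref{homotopyMS24} collapses to $\rde\mu_0=-\iota_\rho^{n+1}\momega$. At de Rham degree zero, $\mu_0\in C^\infty(M)\otimes\wedge^n\mathfrak{g}^*=\Gamma(\wedge^n E^*)$ is a genuine $E$-$n$-form and $\rde\mu_0$ is its honest Lie algebroid differential, so this is verbatim the compatible condition \eqref{momsec1} of Definition \ref{bracomp} with $J=\mu_0=(-1)^{n+1}\hmu_0$. That such a $J$ exists at all, i.e.\ that $-\iota_\rho^{n+1}\momega$ is $\rde$-closed, is precisely the content of Proposition \ref{consitency1} and again rests on $\rd\momega=0$. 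This identifies the rescaled bottom component $\hmu_0$ with a compatible $E$-$n$-form, as claimed.

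I expect the only genuine work to sit in the middle step, the rescaling-and-reorganization that turns \eqref{homotopyMM4} into \eqref{homotopyMS24}: one must check that the $\operatorname{ad}^*_\rho\mu_k$ contributions redistribute across consecutive rungs with exactly the $(-1)^{n-k+1}$ signs dictated by $\hmu_k=(-1)^{n-k+1}\mu_k$. Everything surrounding it is either a bidegree projection or a direct matching of definitions. Since that computation is supplied by \cite{Hirota:2021isx}, once the tower is in the form \eqref{homotopyMS24} the proposition follows immediately from its last line.
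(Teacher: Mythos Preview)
Your proposal is correct and follows essentially the same route as the paper: expand the defining identity \eqref{homotopyMM} by bidegree, pass from $\rd_{CE}$ to the full action-Lie-algebroid differential $\rde=\operatorname{ad}^*_\rho+\rd_{CE}$ via the rescaling $\hmu_k=(-1)^{n-k+1}\mu_k$ (deferring the sign bookkeeping to \cite{Hirota:2021isx}), and then read off the bottom equation \eqref{homotopyMS24} as the compatibility condition \eqref{momsec1} for $J=\mu_0$. Your aside invoking Proposition~\ref{consitency1} is not needed here---you are exhibiting $\mu_0$ as a solution, not arguing that one can exist---but it does no harm.
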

\end{Example}

\begin{Example}[momentum sections]
A momentum map is generalized to a momentum section~\cite{Blohmann:2018}.
See also \cite{Ikeda:2019pef, Ikeda:2021fjk,Kotov:2016lpx}.
A momentum section is also an example of a compatible $E$-$n$-form.

Suppose that a base manifold $M$ is a pre-symplectic manifold with a pre-symplectic form~$\omega$.
Moreover, suppose a Lie algebroid $(E, \rho, [-,-])$ over $M$.
In order to define a momentum section,
we introduce a connection on the vector bundle $E$,
$\nabla\colon\Gamma(E)\rightarrow \Gamma(T^*M \otimes E)$, which is a
$\bR$-linear map satisfying the Leibniz rule,
\[
\nabla (f e) = f \nabla e + (\rd f) \otimes e
\]
for $e \in \Gamma(E)$ and $f \in C^{\infty}(M)$.
A dual connection on $E^*$ is defined by the equation,
\[
\rd \bracket{\mu}{e} = \bracket{\nabla \mu}{e} + \bracket{\mu}{\nabla e}
\]
for all sections $\mu \in \Gamma(E^*)$ and $e \in \Gamma(E)$.

A momentum section is defined as follows.
\begin{Definition}
A section $\mu \in \Gamma(E^*)$ of $E^*$ is called a bracket-compatible
momentum section if $\mu$ satisfies the following two conditions:\footnote{One more condition $\nabla^2 \mu =0$ is imposed in~\cite{Blohmann:2018}. The condition is dropped in this paper, since it is not necessary in our context.}
\begin{gather}%\label{HH2}
\nabla \mu = - \iota_{\rho} \omega,
\qquad {}^E \rd \mu
= - \iota_{\rho}^{2}{\omega}.\label{HH3}
\end{gather}
\end{Definition}

The second equation \eqref{HH3} is nothing but the condition
\eqref{momsec1} of a compatible $E$-$1$-form for~${J=\mu}$.
\begin{Proposition}
A momentum section $\mu$ is a compatible $E$-$1$-form.
\end{Proposition}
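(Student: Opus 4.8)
The plan is to recognize this statement as a direct \emph{identification of definitions} rather than a computation: once the index $n$ is specialized to $1$, the defining data of a bracket-compatible momentum section already contains, verbatim, the compatibility condition \eqref{momsec1}. First I would set $n=1$ and observe that a pre-symplectic manifold $(M,\omega)$---where $\omega$ is by definition a closed $2$-form---is exactly a pre-$1$-plectic manifold in the sense of the relevant Definition, since a pre-$n$-plectic form is a closed $(n+1)$-form and $(n+1)=2$ here. Thus the hypotheses of Definition \ref{bracomp} (a pre-$n$-plectic manifold together with a Lie algebroid $(E,\rho,[-,-])$ over it) are met.

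Next I would take $J := \mu$. Since $\wedge^1 E^* = E^*$, the section $\mu \in \Gamma(E^*)$ is an $E$-$1$-form, hence a legitimate candidate for $J \in \Gamma(\wedge^1 E^*)$. The compatibility condition \eqref{momsec1} with $n=1$ reads ${}^E \rd J = - \iota_{\rho}^{2}\omega$, and the second equation of \eqref{HH3}, which is part of the definition of a bracket-compatible momentum section, is precisely ${}^E \rd \mu = - \iota_{\rho}^{2}\omega$. Hence $J=\mu$ satisfies \eqref{momsec1}, and $\mu$ is a compatible $E$-$1$-form. I would note explicitly that the first condition in \eqref{HH3}, namely $\nabla \mu = -\iota_{\rho}\omega$, plays no role in this particular claim; it constrains the connection data but is not invoked in the compatibility equation itself.

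The only point requiring genuine care---and the nearest thing to an obstacle---is bookkeeping: one must confirm that the operator $\iota_{\rho}^{2}\omega$ appearing in the momentum-section definition agrees, index ordering and all, with the $\iota_{\rho}^{n+1}\omega$ defined just before Definition \ref{bracomp}, so that the two equations are literally the same object and not merely similar in appearance. I would also verify consistency: because $\omega$ is closed, Proposition \ref{consitency1} guarantees that \eqref{consitency} holds, so $-\iota_{\rho}^{2}\omega$ is ${}^E \rd$-closed and the requirement that some $J$ satisfy ${}^E \rd J = -\iota_{\rho}^{2}\omega$ is not vacuous---here that $J$ is supplied by the given momentum section $\mu$ itself. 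With these conventions matched, the proposition follows immediately.
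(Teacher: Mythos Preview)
Your proposal is correct and matches the paper's own reasoning exactly: the paper simply observes, immediately before stating the proposition, that the second equation \eqref{HH3} is nothing but the compatibility condition \eqref{momsec1} specialized to $n=1$ with $J=\mu$. Your additional remarks about the irrelevance of the first equation in \eqref{HH3} and about consistency via Proposition~\ref{consitency1} are accurate elaborations, but the core argument is the same direct identification of definitions.
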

\end{Example}

\begin{Example}[homotopy momentum sections]\cite{Hirota:2021isx}
A homotopy momentum map on a pre-multisymplectic manifold
is generalized to a homotopy momentum section.

As the preparation to introduce a homotopy momentum section, we define another derivation called an $E$-connection on a Lie algebroid.
\begin{Definition}
Let $E$ be a Lie algebroid over a smooth manifold $M$ and $E'$ be a vector bundle over the same base manifold $M$.
An \textit{$E$-connection} on a vector bundle $E'$
with respect to the Lie algebroid $E$ is a $\bR$-linear
map
${}^E \nabla\colon \Gamma(E') \rightarrow \Gamma(E^* \otimes E')$
satisfying
\[
{}^E \nabla_e (f e') = f {}^E \nabla_e e' + (\rho(e) f) e'
\]
for $e \in \Gamma(E)$, $e' \in \Gamma(E')$ and $f \in C^{\infty}(M)$.
\end{Definition}
The ordinary connection is regarded as an
$E$-connection for $E=TM$, $\nabla = {}^{TM} \nabla$.

The \textit{standard $E$-connection} on $E'=E$,
${}^E \nabla\colon \Gamma(E) \rightarrow \Gamma(E^* \otimes E)$ is defined by
\[
{}^E \nabla_{e} e^{\prime} := \nabla_{\rho(e)} {e^{\prime}}
+ [e, e^{\prime}]
\]
for $e, e^{\prime} \in \Gamma(E)$.
A general $E$ connection ${}^E \nabla$ on $E$ is given by
\[ {}^E \nabla_{e} e^{\prime} = \nabla_{\rho(e)} {e^{\prime}}
+ [e, e^{\prime}]
- \chi(e, e^{\prime})
\]
with some tensor $\chi \in \Gamma(E \otimes E^* \otimes E^*)$.

If a normal connection $\nabla$ on $E$ as a vector bundle is given,
we can define the following canonical
$E$-connection for the tangent bundle $E'=TM$.
An (canonical) $E$-connection on a~tangent bundle, ${}^E \nabla\colon \Gamma(TM) \rightarrow \Gamma(E^* \otimes TM)$ is defined by
\[%\label{stEconnection1}
{}^E \nabla_{e} v := \calL_{\rho(e)} v + \rho(\nabla_v e)
= [\rho(e), v] + \rho(\nabla_v e),
\]
where
$e \in \Gamma(E)$ and $v \in \mathfrak{X}(M)$.
It is also called the opposite connection.
One can refer to~\cite{AbadCrainic, CrainicFernandes, DufourZung}
about the general theory of connections on a Lie algebroid.

Given a normal connection $\nabla$, a covariant derivative is generalized
to the derivation on the space of differential forms,
$\rd^{\nabla}\colon \Omega^k(M, \wedge^m E^*)
\rightarrow \Omega^{k+1}(M, \wedge^m E^*)$,
called an \textit{exterior covariant derivative}.\footnote{The exterior covariant derivative is also denoted by $\nabla$ in \cite{Hirota:2021isx}.}
Similarly, given an $E$-connection ${}^E \nabla$, the Lie algebroid differential ${}^E \rd$ can be generalized to the operation on the space
\begin{gather*}
\Gamma\bigl(\wedge^k E' \otimes \wedge^m E^*\bigr), \qquad
{}^E \rd^{\nabla}\colon
\Gamma\bigl(\wedge^k E' \otimes \wedge^m E^*\bigr)
\rightarrow \Gamma\bigl(\wedge^k E' \otimes \wedge^{m+1} E^*\bigr).
\end{gather*}
 For instance, see \cite{AbadCrainic}.
The concrete equation for $E'=T^*M$ is as follows.
\begin{Definition}
For \[\Omega^k(M, \wedge^m E^*) = \Gamma\bigl(\wedge^k T^* M \otimes \wedge^m E^*\bigr),\]
the \textit{$E$-exterior covariant derivative}
${}^E \rd^{\nabla}\colon \Omega^k(M, \wedge^m E^*) \rightarrow \Omega^k\bigl(M, \wedge^{m+1} E^*\bigr)$ is defined by
\begin{align*}%\label{Ediffconnection}
{}^E \rd^{\nabla} \alpha(e_1, \dots, e_{m+1})
:={}& \sum_{i=1}^{m+1} (-1)^{i-1}
{}^E \nabla_{e_i}
(\alpha(e_1, \dots,
\check{e_i}, \dots, e_{m+1})) \\ &
+ \sum_{1 \leq i < j \leq m+1} (-1)^{i+j} \alpha([e_i, e_j], e_1, \dots, \check{e_i}, \dots, \check{e_j}, \dots, e_{m+1})
\end{align*}
for $\alpha \in \Omega^k(M, \wedge^m E^*)$ and $e_i \in \Gamma(E)$.
\end{Definition}

Let $\mu_k \in \Omega^k\bigl(M, \wedge^{n-k} E^*\bigr)
$ be a $k$-form taking values in $\wedge^{n-k} E^*$, where $k=0,\dots, n-1$.
Both the exterior covariant derivative $\nabla$
and the $E$-exterior covariant derivative ${}^E \rd^{\nabla}$
act on the space~${\Omega^k\bigl(M, \wedge^{n-k} E^*\bigr)}$.

\begin{Definition}\label{defhms}
A formal sum $\mu = \sum_{k=0}^{n-1} \mu_k$ is called a \textit{homotopy momentum section} if $\mu$ satisfies
\begin{gather}
 \bigl(\rd^{\nabla} + {}^E \rd^{\nabla}\bigr) \mu = - \sum_{k=0}^{n} \iota_{\rho}^{n+1-k} \momega.
\label{homotopyMS}
\end{gather}
\end{Definition}
Expanding the equation by form degree of both sides,
equation \eqref{homotopyMS} is the following $n$ equations,
\begin{gather}
 \rd^{\nabla} \mu_{n-1} = - \iota_{\rho}^1 \momega,\nonumber
%\label{homotopyMS1}
\\
 \rd^{\nabla} \mu_{n-2} + {}^E \rd^{\nabla} \mu_{n-1}= - \iota_{\rho}^{2} \momega,\nonumber
%\label{homotopyMS2}
\\
 \vdots
\nonumber \\
 \rd^{\nabla} \mu_{k-1} + {}^E \rd^{\nabla} \mu_{k}= - \iota_{\rho}^{n+1-k} \momega,\nonumber
%\label{homotopyMS3}
\\
 \vdots
\nonumber \\
 {}^E \rd^{\nabla} \mu_{0} = {}^E \rd \mu_{0} = - \iota_{\rho}^{n+1} \momega.
\label{homotopyMS4}
\end{gather}
The final equation \eqref{homotopyMS4} is the condition that
$J= \mu_0$ is a compatible $E$-$n$-form \eqref{momsec1}.
\begin{Proposition}
$\mu_0$ in a homotopy momentum section is a compatible $E$-$n$-form.
\end{Proposition}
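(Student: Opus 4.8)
The plan is to read the claim directly off the bidegree decomposition of the single defining equation \eqref{homotopyMS}, exactly as the expanded system \eqref{homotopyMS4} is produced, and then to identify its bottom de Rham component with the compatibility condition \eqref{momsec1}. First I would fix the bidegrees in the bigraded module $\Omega^{\bullet}(M, \wedge^{\bullet} E^*)$: the component $\mu_k$ of a homotopy momentum section lies in $\Omega^k(M, \wedge^{n-k} E^*)$, the exterior covariant derivative $\rd^{\nabla}$ raises the de Rham degree by one while fixing the $E$-degree (the degree in $\wedge^{\bullet} E^*$), the $E$-exterior covariant derivative ${}^E \rd^{\nabla}$ raises the $E$-degree by one while fixing the de Rham degree, and from the definition of $\iota_{\rho}^{k}$ one checks that $\iota_{\rho}^{n+1-k}\omega \in \Omega^{k}(M, \wedge^{n+1-k} E^*)$. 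Both sides of \eqref{homotopyMS} are then homogeneous of total degree $n+1$, and matching bihomogeneous parts yields precisely the system \eqref{homotopyMS4}.

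Next I would isolate the component of de Rham degree $0$ and $E$-degree $n+1$. On the right-hand side this is the single term $-\iota_{\rho}^{n+1}\omega$. On the left-hand side, $\rd^{\nabla}$ raises the de Rham degree, so to land in de Rham degree $0$ it would have to act on a de Rham degree $-1$ piece, which does not exist; hence it contributes nothing, and the only surviving term is ${}^E \rd^{\nabla}\mu_0$. This gives the bottom equation of \eqref{homotopyMS4}, namely ${}^E \rd^{\nabla}\mu_0 = -\iota_{\rho}^{n+1}\omega$.

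Finally I would note that $\mu_0 \in \Omega^0(M, \wedge^n E^*) = \Gamma(\wedge^n E^*)$ is an honest $E$-$n$-form, and that on this bottom slot the $E$-exterior covariant derivative collapses to the ordinary Lie algebroid differential, ${}^E \rd^{\nabla} = {}^E \rd$. Comparing the two definitions, the only possible discrepancy sits in the first sum: where ${}^E \rd$ uses $\rho(e_i)$ acting on the scalar $\alpha(e_1, \dots, \check{e_i}, \dots, e_{m+1})$, the operator ${}^E \rd^{\nabla}$ uses ${}^E \nabla_{e_i}$; but by the defining Leibniz rule ${}^E \nabla_e(f e') = f\, {}^E \nabla_e e' + (\rho(e)f)\,e'$ an $E$-connection acts on a pure function exactly as $\rho(e)$, so the two first sums agree in de Rham degree $0$ while the second (antisymmetrization) sums coincide verbatim. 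Thus ${}^E \rd \mu_0 = -\iota_{\rho}^{n+1}\omega$, which is exactly the compatibility condition \eqref{momsec1} with $J = \mu_0$, establishing that $\mu_0$ is a compatible $E$-$n$-form.

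The main obstacle is precisely this last identification: one must be certain that ${}^E \rd^{\nabla}$ and ${}^E \rd$ genuinely agree on $\Gamma(\wedge^n E^*)$, rather than differing by a curvature or other connection-dependent contribution of ${}^E \nabla$. Everything else is formal bookkeeping with the bigrading, so this is the only step where care is needed; it reduces to the elementary observation above that no connection data survive when the $E$-connection is evaluated on functions, i.e.\ in de Rham degree $0$.
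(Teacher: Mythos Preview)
Your proposal is correct and follows essentially the same approach as the paper: the paper simply expands the defining equation \eqref{homotopyMS} by de Rham degree to obtain the system ending in \eqref{homotopyMS4}, and then asserts the equality ${}^E \rd^{\nabla}\mu_0 = {}^E \rd \mu_0$ to identify the bottom equation with the compatibility condition \eqref{momsec1}. You in fact supply more detail than the paper does on why ${}^E \rd^{\nabla}$ reduces to ${}^E \rd$ in de Rham degree $0$, which is the only substantive point.
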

\end{Example}

\section[Higher Dirac structures on E oplus wedge\^\{n-1\} E\^*]{Higher Dirac structures on $\boldsymbol{E \oplus \wedge^{n-1} E^*}$}\label{HDirac}
In this section, we reformulate a compatible $E$-$n$-form in terms of
a higher Dirac structure of a~Lie $n$-algebroid.
We consider a Lie $n$-algebroid on $E \oplus \wedge^{n-1} E^*$
induced from a Lie algebroid~$E$.

Let $(E, \rho, [-,-])$ be a Lie algebroid over $M$. Then, an almost Lie algebroid structure is induced on $E \oplus \wedge^{n-1} E^*$.
Three operations are constructed from the Lie algebroid operations. They are
a bilinear symmetric form $\inner{-}{-}\colon \Gamma\bigl(E \oplus \wedge^{n-1} E^*\bigr) \times \Gamma\bigl(E \oplus \wedge^{n-1} E^*\bigr) \rightarrow \Gamma\bigl(\wedge^{n-2} E^*\bigr)$,
a~bundle map $\hrho\colon E \oplus \wedge^{n-1} E^* \rightarrow TM$
called the anchor map,
and a bilinear bracket $\courant{-}{-}\colon \Gamma\bigl(E \oplus \wedge^{n-1} E^*\bigr)
\times \Gamma\bigl(E \oplus \wedge^{n-1} E^*\bigr) \rightarrow \Gamma\bigl(E \oplus \wedge^{n-1} E^*\bigr)$ called a (higher) Dorfman bracket.
Let $u + \alpha, v + \beta \in \Gamma\bigl(E \oplus \wedge^{n-1} E^*\bigr)$,
where $u, v \in \Gamma(E)$ and
$\alpha, \beta \in \Gamma\bigl(\wedge^{n-1} E^*\bigr)$.
Then three operations are defined by
\begin{align}%\label{Lien02}
&\inner{u + \alpha}{v + \beta} = \iota_u \beta + \iota_v \alpha,
\qquad\hrho(e)f = \rho(u) f,\label{Lien01}
\\
&\courant{u + \alpha }{v + \beta} = [u, v] + {}^E \calL_{u} \beta - \iota_v {}^E \rd \alpha + \iota_u \iota_v \bigl(\iota_{\rho}^{n+1} \omega\bigr),
\label{Lien03}
\end{align}
where the interior product $\iota_v$ is the contraction of an element $v \in \Gamma(E)$ with $\Gamma(E^*)$,
$\rho(u)$ is the anchor map of $E$,
and ${}^E \calL_u := \iota_u {}^E \rd + {}^E \rd \iota_u$ is
the $E$-Lie derivative.
The Dorfman bracket \eqref{Lien03} is not skew-symmetric but satisfies the Jacobi type identity,
\begin{align}
&\courant{\courant{u + \alpha }{v + \beta}}{w + \gamma}
+ \courant{\courant{v + \beta}{w + \gamma}}{u + \alpha }\nonumber\\
&\qquad+ \courant{\courant{w + \gamma}{u + \alpha }}{v + \beta}
=0.
\label{Leibniz}
\end{align}
An algebroid satisfying equation \eqref{Leibniz} is called a \textit{Leibniz algebroid}.
Especially, we call
\[\bigl(E \oplus \wedge^{n-1} E^*, \inner{-}{-}, \rho, \courant{-}{-}\bigr)\]
a \textit{Lie $n$-algebroid}, or a \textit{Vinogradov algebroid}
induced from a Lie algebroid~$E$ \cite{Grutzmann:2014hkn}.
We only consider this specific Lie $n$-algebroid in this paper.
There are many analysis about general theories for Lie $n$-algebroids.
In general, Lie $n$-algebroids are induced from a QP-manifold of degree~$n$.
A general concept of Lie $n$-algebroids is referred to, for instance,
\cite{Bonavolonta-Poncin, Cueca:2019rmv, Cueca:2023oxw, Hagiwara, Ikeda:2013wh, Severa:2005, Sheng-Zhu, Zambon:2010ka}.

Equations \eqref{Lien01}--\eqref{Lien03} does not contain $J$.
The compatible condition \eqref{momsec1},
${}^E \rd J = - \iota_{\rho}^{n+1} \omega$, is described as
an extra structure on the Lie $n$-algebroid, a higher Dirac structure\cite{BiSheng,Bursztyn:2016hln, Zambon:2010ka}.

\begin{Definition}
A higher Dirac structure is the subbundle $L$ of the Lie $n$-algebroid
satisfying the conditions,
$\inner{e_1}{e_2} = 0$ for all $e_1, e_2 \in \Gamma(L)$, and
$\courant{e_1}{e_2}$ is an element of $\Gamma(L)$,
i.e., $\Gamma(L)$ is involutive with respect to the bracket,
$\courant{\Gamma(L)}{\Gamma(L)} \subset \Gamma(L)$.
\end{Definition}
Note that we do not impose that $L$ is the maximal rank with respect to the inner product~${\inner{-}{-}}$ but only isotropic, which is
different from the definition of a Dirac structure of a~Courant algebroid.

For an $E$-$n$-form $J \in \Gamma(\wedge^n E^*)$, we consider the following space of sections of $E \oplus \wedge^{n-1} E^*$,
\begin{align*}
\Gamma(L) &= \big\{u + \inner{J}{u} \in \Gamma\bigl(E \oplus \wedge^{n-1} E^*\bigr)
\mid u \in \Gamma(E) \big\}.
%\label{diracsubbundle}
\end{align*}
\begin{Theorem}\label{admDirac}
If $J \in \Gamma(\wedge^n E^*)$ and $\omega \in \Omega^{n+1}(M)$ satisfy ${}^E \rd J = - \iota_{\rho}^{n+1} \omega$, then
$L$ is a higher Dirac structure of a Lie $n$-algebroid
$E \oplus \wedge^{n-1} E^*$.
\end{Theorem}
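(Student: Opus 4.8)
The plan is to verify the two defining properties of a higher Dirac structure directly on the generators $u + \iota_u J$ with $u \in \Gamma(E)$ (where $\iota_u J = \inner{J}{u} \in \Gamma(\wedge^{n-1}E^*)$), and then to propagate both properties to the full $C^\infty(M)$-module $\Gamma(L)$.

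\emph{Isotropy.} For two generators $e_1 = u + \iota_u J$ and $e_2 = v + \iota_v J$, the bilinear form \eqref{Lien01} gives $\inner{e_1}{e_2} = \iota_u\iota_v J + \iota_v\iota_u J$, which vanishes by the antisymmetry of $J \in \Gamma(\wedge^n E^*)$. This step uses no hypothesis on $J$ at all. Since $\inner{-}{-}$ is $C^\infty(M)$-bilinear, isotropy of the generators already forces $\inner{-}{-}$ to vanish identically on $\Gamma(L)$.

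\emph{Involutivity, and the main obstacle.} I would compute the Dorfman bracket $\courant{u+\iota_u J}{v+\iota_v J}$ from \eqref{Lien03}. Its $E$-component is $[u,v]$, so the whole content is to show that its $\wedge^{n-1}E^*$-component equals $\iota_{[u,v]}J$, which is precisely the $(n-1)$-form attached to the generator of $[u,v]$. The tools are the Lie-algebroid Cartan calculus: the magic formula ${}^E\rd$-version ${}^E\calL_u = \iota_u\,{}^E\rd + {}^E\rd\,\iota_u$ and the commutation relation $[{}^E\calL_u,\iota_v] = \iota_{[u,v]}$ (which is purely algebraic and follows, as on $T^*M$, from the graded-derivation axioms together with $[\rho(e_1),\rho(e_2)] = \rho([e_1,e_2])$). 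Applying these, the first two terms collapse as ${}^E\calL_u(\iota_v J) - \iota_v\,{}^E\rd(\iota_u J) = \iota_{[u,v]}J + \iota_v\iota_u\,{}^E\rd J$, so what remains is to show that the last, $\omega$-dependent term of the bracket, $\iota_u\iota_v\bigl(\iota_\rho^{n+1}\omega\bigr)$, cancels $\iota_v\iota_u\,{}^E\rd J$. This is exactly where the compatible condition \eqref{momsec1}, ${}^E\rd J = -\iota_\rho^{n+1}\omega$, is used decisively: substituting it turns $\iota_v\iota_u\,{}^E\rd J$ into a contraction of $\iota_\rho^{n+1}\omega$, after which antisymmetry of the $(n+1)$-form $\iota_\rho^{n+1}\omega$ produces the cancellation. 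I expect the genuine difficulty to be the sign and ordering bookkeeping: one must track $\iota_u\iota_v$ versus $\iota_v\iota_u$ consistently, together with the argument-reversal built into the definition of $\iota_\rho^{k}$, so that the $\omega$-term kills $\iota_v\iota_u\,{}^E\rd J$ on the nose rather than doubling it.

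\emph{Extension to $\Gamma(L)$.} Finally I would promote involutivity from generators to arbitrary sections. The Dorfman bracket obeys a right Leibniz rule $\courant{e_1}{f e_2} = f\courant{e_1}{e_2} + (\rho(u)f)\,e_2$, so stability under scaling the second slot is immediate once the generator-level statement holds. For the first slot, the left Leibniz rule carries an anomalous term proportional to $\inner{e_1}{e_2}$; this term vanishes on $\Gamma(L)$ by the isotropy already proved, so involutivity on generators propagates to the whole module $\courant{\Gamma(L)}{\Gamma(L)} \subset \Gamma(L)$. This completes the verification that $L$ is a higher Dirac structure.
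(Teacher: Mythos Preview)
Your proposal is correct and follows essentially the same approach as the paper's proof: both verify isotropy from the skew-symmetry of $J$ and establish involutivity on generators via the Lie-algebroid Cartan identities ${}^E\calL_u = \iota_u\,{}^E\rd + {}^E\rd\,\iota_u$ and $[{}^E\calL_u,\iota_v] = \iota_{[u,v]}$, with the compatible condition \eqref{momsec1} supplying exactly the cancellation of the $\omega$-term. Your final ``extension to $\Gamma(L)$'' step is superfluous, however, since the map $u \mapsto u + \iota_u J$ is already $C^\infty(M)$-linear, so every section of $L$ is of the generator form and the generator-level computation already covers all of $\Gamma(L)$.
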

\begin{proof}
The inner product of two elements of $\Gamma(L)$,
$u + \inner{J}{u}$ and $v + \inner{J}{v}$ for $u, v \in \Gamma(E)$
is
\begin{align*}
\inner{u + \inner{J}{u}}{v + \inner{J}{v}} &=
\inner{u}{{J}{v}} + \inner{{J}{u}}{v} = 0
\end{align*}
from completely skewsymmetricity of $J$.

Moreover,
using~\eqref{Lien03},
the Dorfman bracket is computed as
\begin{align}
\courant{u + \inner{J}{u}}{v + \inner{J}{v}}
& = \courant{u + \iota_u J}{v + \iota_v J}
\nonumber \\
& = [u, v] + {}^E \calL_{u} \iota_v J - \iota_v {}^E \rd \iota_u J
+ \iota_u \iota_v \bigl(\iota_{\rho}^{n+1} \omega\bigr)
\nonumber \\
& = [u, v] + {}^E \calL_{u} \iota_v J - \iota_v {}^E \calL_{u} J
- \iota_v \iota_u {}^E \rd J
+ \iota_u \iota_v \bigl(\iota_{\rho}^{n+1} \omega\bigr).
\label{Dirac01}
\end{align}
Using the formula
${}^E \calL_{u} \iota_v - \iota_v {}^E \calL_{u} = \iota_{[u, v]}$
analogous to the Cartan formula, \eqref{Dirac01} becomes
\begin{align}
& = [u, v] + \inner{J}{[u, v]} + \iota_u \iota_v {}^E \rd J
+ \iota_u \iota_v \bigl(\iota_{\rho}^{n+1} \omega\bigr),
\label{Dirac02}
\end{align}
which is an element of $\Gamma(L)$ again if
the identity ${}^E \rd J = - \iota_{\rho}^{n+1} \omega$
is satisfied.
Note that we use~$\iota_u \iota_v {}^E \rd J = -
\iota_v \iota_u {}^E \rd J$.
Since then \eqref{Dirac02} becomes
\begin{align*}
\courant{u + \inner{J}{u}}{v + \inner{J}{v}}
&= [u, v] + \inner{J}{[u, v]}
\end{align*}
for every $u, v \in \Gamma(E)$.
\end{proof}

\begin{Example}
Let $(M, \pi, H)$ be a twisted Poisson manifold and
$T^*M$ is a Lie algebroid induced from the twisted Poisson structure in Example \ref{twistedP}.
We take $n=2$ and consider a Lie $2$-algebroid~${T^*M \oplus TM \simeq TM \oplus T^*M}$. For $\alpha, \beta \in \Omega^1(M)$, $u, v \in \mathfrak{X}(M)$,
and $f \in C^{\infty}(M)$
operations are defined as
\begin{align*}%\label{Lie201}\label{Lie202}\label{Lie203}
&\inner{\alpha + u}{\beta + v} = \iota_u \beta + \iota_v \alpha,
\qquad
\hrho(\alpha + u)f = \pi^{\sharp}(\alpha) (f),
\\
&\courant{\alpha + u}{\beta + v} =
[\alpha, \beta]_{\pi, H} + \calL^{\pi, H}_{\alpha} v
- \iota_{\beta} \rd_{\pi, H} u + \bigl\langle\otimes^3 \pi,H\bigr\rangle(\alpha, \beta),
\end{align*}
which give a Courant algebroid with
$(\inner{-}{-}, \hrho, \courant{-}{-})$,
where
\begin{gather*}
[\alpha, \beta]_{\pi,H} = \calL_{\pi^{\sharp} (\alpha)}\beta - \calL_{\pi^{\sharp} (\beta)} \alpha - \rd(\pi(\alpha, \beta))
+ \iota_{\pi^{\sharp}(\alpha)} \iota_{\pi^{\sharp}(\beta)} H,
\qquad
\rd_{\pi} u = [\pi, u]_S,
\\
\rd_{\pi, H} u = \rd_{\pi} u + \big\langle\bigl(\otimes^2 \pi \otimes 1\bigr)H,u\big\rangle,
\qquad
L^{\pi, H}_{\alpha} v = (\iota_{\alpha} \rd_{\pi, H} +
\rd_{\pi, H} \iota_{\alpha}) v.
\end{gather*}
Let us introduce a bivector field $J \in \Gamma\bigl(\wedge^2 TM\bigr)$ and define a subbundle defined by
\begin{align*}
L &= \big\{\alpha + \inner{J}{\alpha} \in
\Gamma(T^*M \oplus TM) \mid \alpha \in \Omega^1(M) \big\}.
\end{align*}
Iff $L$ is a Dirac structure, $J$ is a compatible $T^*M$-$2$-form, i.e., a bivector field satisfying ${}^E \rd J = - \iota_{\pi}^2 H$.
This case is special. We can take $J= \pi$ since both are elements of $\Gamma\bigl(\wedge^2 TM\bigr)$, and~${{}^E \rd \pi = - \iota_{\pi}^2 H}$ is equivalent to the condition that $(\pi, H)$ is a twisted Poisson
structure.
\end{Example}

\section{Q-manifold description}\label{Qmfd}
In this section, a compatible $E$-$n$-form is formulated by graded geometry.
A Q-manifold description based on graded geometry provides a
clear method to formulate Lie $n$-algebroids.
Concretely, the compatible $E$-$n$-form is described by a Lagrangian Q-submanifold of a Q-manifold.

A graded manifold $\calM$ is a locally ringed space $(M, \calO_{\calM})$
whose structure sheaf $\calO_{M}$ is a~$\bZ$-graded commutative algebra over an ordinary smooth manifold $M$. The grading is compatible with the supermanifold grading, that is, a~variable of even degree is commutative and a~variable of odd degree is anticommutative. By definition, the structure sheaf of $M$ is locally isomorphic to $C^{\infty}(U)\otimes S^{\bullet}(V)$, where $U$ is a local chart on $M$, $V$ is a graded vector space, and~$S^{\bullet}(V)$ is a free graded commutative ring on~$V$.
$\calO_{\calM}$ is also denoted by $C^{\infty}(\calM)$.
Grading of an element~${x \in C^{\infty}(\calM)}$
is called degree and denoted by $|x|$.
For two homogeneous elements ${f, g \in C^{\infty}(\calM)}$, the product is graded commutative, $fg = (-1)^{|f||g|}gf$.
There are some recent reviews of graded manifolds in \cite{Cattaneo:2010re, Ikeda:2012pv,
Roytenberg:2006qz, Vysoky:2021wox} and references therein.

We concentrate on nonnegatively graded manifolds with $\bZ_{\geq 0}$ grading
called an N-manifold.
\begin{Definition}
If an N-manifold $\calM$ has a vector field $Q$ of degree $+1$ satisfying $Q^2=0$, it is called a \textit{Q-manifold}, or a \textit{differential graded $($dg$)$ manifold}.
\end{Definition}
This vector field $Q$ is called a homological vector field.

Let $E$ be a vector bundle over $M$.
$E$ is a Lie algebroid if and only if $E[1]$ is a Q-manifold with a homological vector field \cite{Vaintrob}.
Concrete correspondence is as follows.
Let $(x^i, q^a)$ be local coordinates on $E[1]$ of degree $(0, 1)$.
Here $i$ is the index on the base manifold $M$ and $a$ is the index of the fiber.
A vector field of degree $+1$ is locally given by
\begin{align}
Q &= \rho^i_a(x) q^a \frac{\partial}{\partial x^i}
%+ (-1)^n
- \frac{1}{2} C_{bc}^a(x) q^b q^c \frac{\partial}{\partial q^a},
\label{homologicalQ}
\end{align}
where $\rho^i_a(x)$ and $C^a_{bc}(x)$ are local functions.
We define a bundle map $\rho\colon E \rightarrow TM$ and the Lie bracket
$[-,-]\colon \Gamma(E) \times \Gamma(E) \rightarrow \Gamma(E)$
as $\rho(e_a) := \rho^i_a(x) \partial_i$, $[e_a, e_b] := C_{ab}^c(x) e_c$, for the basis of sections $e_a \in \Gamma(E)$.
Then, $Q^2=0$ is equivalent that the two operations $(\rho, [-,-])$ satisfy the definition of a Lie algebroid on $E$:
all the identities of the anchor map and the Lie bracket in the Lie algebroid $E$ are obtained from the $Q^2=0$ condition.

We have another description of a Lie algebroid based on a QP-manifold.
\begin{Definition}
If an N-manifold $\calM$ has a graded symplectic form $\gomega$ of degree $n$ and a vector field $Q$ of degree $+1$ satisfies $Q^2=0$, and $\calL_Q \gomega =0$,
$(\calM, \gomega, Q)$ is called a \textit{QP-manifold} of degree $n$.
\end{Definition}
In a QP-manifold, if $n \neq 0$, there exists a Hamiltonian function $\Theta$ such that $Q = \sbv{\Theta}{-}$.
$Q^2 = 0$ is equivalent to $\sbv{\Theta}{\Theta}=0$.

For a Lie algebroid, we take an $n-1$-shifted cotangent bundle
$\calM= T^*[n-1]E[1]$ with a nonnegative integer $n-1$.
Take local coordinates on $T^*[n-1]E[1]$,
$(x^i, q^a, z_i, y_a)$ of degree $(0, 1, n-1, n-2)$.
Since $T^*[n-1]E[1]$ is a (graded) cotangent bundle, it
has a canonical graded symplectic form of degree $n-1$, $\gomega = \delta x^i \wedge \delta z_i + \delta q^a \wedge \delta y_a$, where $\delta$ is the graded de Rham differential on the graded manifold.
A graded Poisson bracket for $\gomega$ is defined by
\[
\sbv{f}{g} =
(-1)^{|f|+n} \iota_{X_f} \iota_{X_g} \gomega.
\]
The Poisson brackets $\sbv{-}{-}$ satisfy
\begin{align*}
\big\{x^i, z_j \big\} &= \delta^i_j, \qquad \big\{z_j, x^i \big\} = - \delta^i_j,
\qquad
\{q^a, y_b \} = \delta^a_b, \qquad \{y_b, q^a \} = -(-1)^{n-2} \delta^a_b.
\end{align*}
We take the following degree $n$ function on $T^*[n-1]E[1]$,
\begin{align}
\Theta_{\rm Lie} &=
\bracket{\rho(q)}{z} + \frac{(-1)^{n-1}}{2} \inner{[q, q]}{y}
= (-1)^{n-1} \rho^i_a(x) z_i q^a +
\frac{(-1)^{n-1}}{2} C_{bc}^a(x) q^b q^c y_a.
\label{LAHamiltonian}
\end{align}
Here $\langle-,-\rangle$ is the pairing of $TM$ and $T^*M$
an $\inner{-}{-}$ is the pairing of $E$ and $E^*$.
${\{\Theta_{\rm Lie}, \Theta_{\rm Lie} \}=0}$ is equivalent to
$\rho(e_a) = \rho^i_a(x) \partial_i$ and $C_{bc}^a(x)$ are the anchor map and the structure function of the Lie bracket of a Lie algebroid.

We can prove that $\iota_{Q_{\rm Lie}} \gomega = - \delta \Theta_{\rm Lie}$, equivalently,
$Q_{\rm Lie} = - \{\Theta_{\rm Lie}, - \}$ in \eqref{LAHamiltonian} satisfies~${Q_{\rm Lie}^2=0}$ and $\calL_{Q_{\rm Lie}} \gomega =0$ if and only if $E$ is a Lie algebroid.
The local coordinate expression~is
\begin{align*}%\label{QLie}
\QLie ={}&
- \{\Theta_{\rm Lie}, - \}
= \rho^i_a(x) q^a \frac{\partial}{\partial x^i}
- \frac{1}{2} C_{bc}^a(x) q^b q^c \frac{\partial}{\partial q^a}
+ \bigl((-1)^n \rho^i_a z_i
- C_{ab}^c(x) q^b y_c \bigr) \frac{\partial}{\partial y_a}
 \\ &
- \left(\partial_i\rho^j_a z_j q^a
- \frac{(-1)^n }{2} \partial_i C_{bc}^a(x) q^b q^c y_a
\right) \frac{\partial}{\partial z_i}.
\end{align*}
Here $\partial_i = \frac{\partial}{\partial x^i}$.
$\QLie$ is also denoted by $\calL_Q = \QLie$ and called the tangent lift of
the vector field~$Q$ in \eqref{homologicalQ}.
$\QLie$ is the homological vector field such that $Q = \QLie|_{E[1]}$.
$T^*[n-1]E[1]$ has a~QP-manifold structure induced from a Lie algebroid structure on $E$.

We consider a function of degree $n$,
\begin{align*}
\uJ &:= \frac{1}{n!} J_{a_1 \dots a_n}(x) q^{a_1} \cdots q^{a_n},
\end{align*}
corresponding to the $E$-$n$-form $J \in \Gamma(\wedge^n E^*)$, where
\begin{align*}
\frac{1}{n!} J_{a_1 \dots a_n}(x) &:= J(e_{a_1}, \dots, e_{a_n}).
\end{align*}
The term $\uJ$ is summed to $\Theta_{\rm Lie}$ as $\Theta_{{\rm Lie}\,J} = \Theta_{\rm Lie} + (-1)^{n-1} \uJ$, and consider the vector field of degree $+1$
\begin{align*}
Q_{{\rm Lie}\,J} &= - \sbv{\Theta_{{\rm Lie}\,J}}{-}.
\end{align*}
$Q_{{\rm Lie}\,J}$ is not necessarily a homological vector field since
$Q_{{\rm Lie}\,J}^2$ is not necessarily zero without any condition for $\uJ$.
In order to realize the condition \eqref{momsec1} of a compatible $E$-$n$-form $J$,
we modify~$Q_{{\rm Lie}\,J}$ by adding a term induced from a (pre)-$n$-plectic form $\omega=H$ as\footnote{In this section and the next section, a pre-$n$-plectic form on a (normal) base manifold $M$ is denoted by $H$ not~$\omega$ to distinguish with $\gomega$.}
\begin{align}
\Qadm ={}& \sbv{-\Theta_{\rm Lie} + (-1)^n \uJ}{-} + (-1)^{n(n+1)/2} \bigl(\gomega^{\flat}\bigr)^{-1} \iota_{\rho(q)}^n H
\nonumber \\
={}& \rho^i_a(x) q^a \frac{\partial}{\partial x^i}
- \frac{1}{2} C_{bc}^a(x) q^b q^c \frac{\partial}{\partial q^a}
\nonumber \\ &
+ \left((-1)^n \rho^i_a z_i
- C_{ab}^c(x) q^b y_c
+ (-1)^n \frac{1}{(n-1)!} J_{a b_2 \dots b_n} q^{b_2} \cdots q^{b_n} \right)
\frac{\partial}{\partial y_a}
\nonumber \\ &
- \left[ \partial_i\rho^j_a z_j q^a
- \frac{(-1)^n}{2} \partial_i C_{bc}^a(x) q^b q^c y_a
\right.
\nonumber \\ &
\left.
+ (-1)^n \frac{1}{n!} \bigl(\partial_i J_{a_1 \dots a_n}
- \rho^{j_1}_{a_1} \cdots \rho^{j_n}_{a_n}
H_{i j_1 \dots j_n} \bigr) q^{a_1}\cdots q^{a_n}
\right] \frac{\partial}{\partial z_i},
\label{homologicalvf}
\end{align}
where
\begin{align*}
H = \frac{1}{(n+1)!} H_{i_1 \dots i_{n+1}}(x) \rd x^{i_1} \wedge \cdots \wedge \rd x^{i_{n+1}},
\end{align*}
is a closed $(n+1)$-form.
For a $1$-form
\[\alpha = \alpha_i(x) \rd x^i,\qquad
\bigl(\gomega^{\flat}\bigr)^{-1}\colon \ T^*[n-1]E[1] \rightarrow TE[1]\]
is the map induced from the symplectic form $\gomega$.
\begin{align*}
\bigl(\gomega^{\flat}\bigr)^{-1} \alpha = \alpha_i(x) \frac{\partial}{\partial z_i}.
\end{align*}
\begin{Proposition}
Let $E$ be a Lie algebroid and $H$ is a pre-$n$-plectic form.
The vector field of degree $+1$, $\Qadm$ in \eqref{homologicalvf} satisfies
$\Qadm^2=0$ if and only if $J$ is a compatible $E$-$n$-form.
\end{Proposition}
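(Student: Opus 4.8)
The plan is to use that $\Qadm$ is odd of degree $+1$, so that $\Qadm^2=\tfrac12[\Qadm,\Qadm]$, and to isolate the pieces that genuinely depend on $J$ and $H$. Following \eqref{homologicalvf} I write $\Qadm=\sbv{\Psi}{-}+V_H$, where $\Psi:=-\Theta_{\rm Lie}+(-1)^n\uJ$ is the Hamiltonian part and $V_H:=(-1)^{n(n+1)/2}\bigl(\gomega^{\flat}\bigr)^{-1}\iota_{\rho(q)}^n H$ is the non-Hamiltonian correction coming from the pre-$n$-plectic form; note that $V_H=V^i(x,q)\,\partial/\partial z_i$ has coefficients depending only on $(x,q)$. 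Expanding the graded commutator gives three contributions, the Hamiltonian--Hamiltonian term $\tfrac12\sbv{\sbv{\Psi}{\Psi}}{-}$, the cross term $[\sbv{\Psi}{-},V_H]$, and $[V_H,V_H]$, which I treat in turn.

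For the Hamiltonian--Hamiltonian term I use $[\sbv{f}{-},\sbv{g}{-}]=\sbv{\sbv{f}{g}}{-}$ and expand $\sbv{\Psi}{\Psi}$. The term $\sbv{\Theta_{\rm Lie}}{\Theta_{\rm Lie}}$ vanishes because $E$ is a Lie algebroid (this is exactly the content recalled after \eqref{LAHamiltonian}), and $\sbv{\uJ}{\uJ}=0$ because $\uJ$ is a function of $(x,q)$ only and any two such functions Poisson-commute. Hence $\sbv{\Psi}{\Psi}=-2(-1)^n\sbv{\Theta_{\rm Lie}}{\uJ}$. The key identification is that $\sbv{\Theta_{\rm Lie}}{-}$, restricted to functions of $(x,q)$, reproduces (up to sign) the Lie algebroid differential $\rde$, since $\QLie=-\sbv{\Theta_{\rm Lie}}{-}$ restricts on $E[1]$ to the homological vector field $Q$ of \eqref{homologicalQ}; therefore $\sbv{\Theta_{\rm Lie}}{\uJ}$ is, up to sign, the degree-$(n+1)$ function representing the $E$-$(n+1)$-form $\rde J$.

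It remains to show that the $H$-dependent contributions reassemble into $\iota_{\rho}^{n+1}\omega$. The bracket $[V_H,V_H]$ vanishes outright, because $V_H$ only moves in the $z$-directions while its coefficients are $z$-independent and the $\partial/\partial z_i$ commute. The cross term $[\sbv{\Psi}{-},V_H]$ is where the real work lies: one part comes from $\sbv{\Psi}{-}$ differentiating the $(x,q)$-coefficients $V^i$ of $V_H$, and another from $V_H$ acting through $\partial/\partial z_i$ on the $z$-linear coefficients of $\sbv{\Psi}{-}$ (the $(-1)^n\rho^i_a z_i$ term in the $\partial/\partial y_a$ slot and the $\partial_i\rho^j_a z_j q^a$ term in the $\partial/\partial z_i$ slot). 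Collecting these, the $J$- and $H$-terms surviving in the $\partial/\partial y_a$ and $\partial/\partial z_i$ coefficients of $\Qadm^2$ are precisely the $q$- and $x$-derivatives of a single degree-$(n+1)$ function $G$ on $T^*[n-1]E[1]$ representing the $E$-$(n+1)$-form $\rde J+\iota_{\rho}^{n+1}\omega$. Here the closedness of $H$ enters through Proposition~\ref{consitency1}, i.e.\ $\rde\bigl(\iota_{\rho}^{n+1}\omega\bigr)=0$ together with the anchor identity $[\rho(e_1),\rho(e_2)]=\rho([e_1,e_2])$, which cancels the spurious $\rd H$- and $\partial\rho$-terms generated when $\sbv{\Psi}{-}$ hits $V^i$. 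In parallel, every coefficient of $\Qadm^2$ not involving $J$ or $H$ coincides with the corresponding coefficient of $\QLie^2$, which vanishes identically since $E$ is a Lie algebroid.

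Combining the three pieces, I expect $\Qadm^2=c\,\sbv{G}{-}$ for a nonzero scalar $c$. Since $G$ is polynomial in $q$ of positive degree, the brackets $\sbv{G}{z_i}=\partial G/\partial x^i$ and $\sbv{G}{y_a}=\pm\,\partial G/\partial q^a$ show that $\sbv{G}{-}=0$ forces $G$ to be constant in $(x,q)$, hence to vanish. Therefore $\Qadm^2=0$ if and only if $G=0$, i.e.\ $\rde J=-\iota_{\rho}^{n+1}\omega$, which is exactly the compatibility condition \eqref{momsec1} of Definition~\ref{bracomp}. The main obstacle is the cross term $[\sbv{\Psi}{-},V_H]$: because $V_H$ is genuinely non-Hamiltonian, this step cannot be reduced to a bracket identity and must be done in coordinates, with the sign factor $(-1)^{n(n+1)/2}$ and the closedness of $H$ arranged so that the $H$-contribution lands exactly as $-\iota_{\rho}^{n+1}\omega$ rather than as a derivative of it.
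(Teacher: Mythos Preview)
Your strategy coincides with the paper's: split $\Qadm$ into its Hamiltonian piece $\sbv{\Psi}{-}$ and the non-Hamiltonian correction $V_H$, expand the graded square, and recognise the surviving obstruction as the Hamiltonian vector field of a constant multiple of $\rde J+\iota_\rho^{n+1}\omega$. The paper's own argument is in fact terser than yours---it simply records that $\Qadm^2$ is proportional to $\sbv{\Theta_{\rm Lie}}{\uJ}+\iota_{\rho(q)}^{n+1}H$---so your expansion of the three contributions is a faithful elaboration of the same computation.

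One point to tighten: your treatment of the cross term $[\sbv{\Psi}{-},V_H]$ invokes Proposition~\ref{consitency1}, but that proposition (the statement $\rde\bigl(\iota_\rho^{n+1}\omega\bigr)=0$) is not the identity you need here. What must actually be checked is that $[\QLie,V_H]$ is itself Hamiltonian, equal up to sign to $\sbv{\iota_\rho^{n+1}H}{-}$; this is a direct coordinate verification using $\rd H=0$ together with the anchor identity $\rho^j_a\partial_j\rho^i_b-\rho^j_b\partial_j\rho^i_a=C^c_{ab}\rho^i_c$, and it is logically independent of Proposition~\ref{consitency1}. You have named exactly the right ingredients (closedness of $H$ and the anchor relation), but they enter by making the $\partial_{z_i}$-coefficient of $[\QLie,V_H]$ match $\partial_i\bigl(\iota_\rho^{n+1}H\bigr)$ rather than via the $\rde$-closedness of $\iota_\rho^{n+1}\omega$. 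Once that is established, your conclusion $\Qadm^2=c\,\sbv{G}{-}$ with $G$ representing $\rde J+\iota_\rho^{n+1}\omega$, and the nondegeneracy argument that $\sbv{G}{-}=0$ forces $G=0$, go through as you wrote.
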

\begin{proof}
We compute the square of $\Qadm$.
Using equations $Q_{{\rm Lie}\,J}^2=0$ and
\[\big\{\Theta_{\rm Lie},\iota_{\rho(q)}^n H\big\}=
\big\{\uJ,\iota_{\rho(q)}^n H\big\} = 0,\] etc.,
we obtain that $\Qadm^2$ is proportional to $\sbv{\Theta_{\rm Lie}}{\uJ} + \iota_{\rho(q)}^{n+1} H$,
which must be zero. This equation is equivalent to
the condition of a compatible $E$-$n$-form~\eqref{momsec1}.
\end{proof}

\begin{Example}
Let $\pi = \frac{1}{2} \pi^{ij}(x) \partial_i \wedge \partial_j
 \in \Gamma\bigl(\wedge^2 TM\bigr)$ be a bivector field and
$H = \frac{1}{3!} H_{ijk}(x) \rd x^i \wedge \rd x^j \wedge \rd x^k \in \Omega^3(M)$ be a closed $3$-form.
We take $n=2$ and consider an N-manifold $T^*[1]T[1]M$.
Local coordinates are $\bigl(x^i, q_i, z_i, y^i\bigr)$ of degree $(0, 1, 1, 0)$.
Now, the homological vector field $Q$ is given by
\begin{align*}%\label{homologicalvf2}
Q ={}& \pi^{ij}(x) q_i \frac{\partial}{\partial x^j}
- \frac{1}{2} \partial_i \pi^{jk}(x) q_j q_k \frac{\partial}{\partial q_i}
- \bigl(\pi^{ij} z_j
+ \partial_k \pi^{ij}(x) q_j y^k
- \pi^{ij} q_j \bigr)
\frac{\partial}{\partial y_i}
 \\
& + \left[\partial_i \pi^{jk} z_j q_k
+ \frac{1}{2} \partial_i \partial_j \pi^{kl} q_k q_l y_j
- \frac{1}{2} \bigl(\partial_i \pi^{jk}
- \pi^{jl} \pi^{km}
H_{ilm} \bigr) q_j q_k
\right] \frac{\partial}{\partial z_i},
\end{align*}
$Q^2=0$ is equivalent that $(\pi, H)$ is the twisted Poisson structure.
\end{Example}

There does not necessarily exist a homological function $\Theta$ satisfying
$\Qadm = \sbv{\Theta}{-}$ for the homological function in \eqref{homologicalvf},
if $H$ is nonzero. A compatible $E$-$n$-form can not be constructed as a QP-structure on $T^*[n-1]E[1]$.\footnote{Since a homological function for a compatible $E$-$n$-form does not exist, we cannot construct a topological sigma model by using the so called AKSZ construction \cite{Alexandrov:1995kv}.}

\section{Realization by QP-manifold and twist}\label{Qmfdtwist}
In the Q-manifold description in the previous section, a compatible condition is not formulated as a QP-manifold, i.e.,
there exists no
homological function $\Theta$ such that $\iota_{Q} \omega = - \delta \Theta$
for the homological vector field $Q$ in equation~\eqref{homologicalvf}
if $H \neq 0$.
We consider another realization in graded geometry as
a Lagrangian Q-submanifold of a QP-manifold.

For it, we consider a shifted cotangent bundle
of the graded bundle in Section~\ref{Qmfd}.
Take a~shifted double cotangent bundle $T^*[n]\calM$
of $\calM = T^*[n-1]E[1]$ in the previous Section~\ref{Qmfd}.
We consider a QP-structure on $T^*[n]\calM$, i.e., we take the canonical
graded symplectic form $\ggomega$ and \smash{$\tQ$} such that $\calL_{\tQ} \ggomega =0$.

\subsection{Twisting of homological functions and twisted QP-manifolds}

In this section, a general theory to describe a Lagrangian submanifold and
its geometric structure of a QP-manifold in terms of graded geometry
and applied to a compatible $E$-$n$-form.
We use twisting of a homological function introduced by
\cite{Roytenberg:2001am} and analyzed in
\cite{Ikeda:2013wh,Kosmann-Schwarzbach:2007,Terashima}.

Let $\proj\colon T^*[n]\calM \rightarrow \calM$ be a natural projection map.
$\calM$ is a trivial Lagrangian graded submanifold since
$\{\proj^* f, \proj^* g\} =0$ for all $f, g \in C^{\infty}(\calM)$
and $\calM$ is of half dimensions of~$T^*[n]\calM$.

If $\bigl(T^*[n]\calM, \ggomega, \tQ\bigr)$ is a QP-manifold of degree $n$,
an induced graded
Poisson structure of degree $-n$ is defined on $\calM$ as follows.
A bilinear graded bracket of degree $-n+1$ is defined on~$\calM$ by
a derived bracket,
\begin{align}
\{f, g \}_{\calM} &:= \proj_* \big\{\big\{\proj^* f, \Thetan \big\}, \proj^* g\big\},
\label{derivedbracket01}
\end{align}
where $\{-,-\}$ is the graded Poisson bracket induced from \smash{$\ggomega$}
and $\Thetan$ is a homological function for \smash{$\tQ$}.
$\{-, -\}_{\calM}$ is a graded Poisson bracket of degree $-n$ by using
$\{\proj^* f, \proj^* g \} =0$.
In fact, $\{-, - \}_{\calM}$ satisfies identities of a graded Poisson bracket,
\begin{align*}
&\{f, g \}_{\calM} = - (-1)^{(|f|-n+1)(|g|-n+1)} \{g, f \}_{\calM},
\\
&\{f, gh \}_{\calM} = \{f, g\}_{\calM} h + (-1)^{(|f|-n+1)|g|} g \{f, h \}_{\calM},
\\
&\{f, \{g, h \}_{\calM} \}_{\calM} = \{\{f, g \}_{\calM}, h \}_{\calM}
+ (-1)^{(|f|-n+1)(|g|-n+1)} \{g, \{f, h \}_{\calM} \}_{\calM}
\end{align*}
for $f, g, h \in C^{\infty}(\calM)$.

In order to construct a general geometric structure realized in a Lagrangian submanifold $\calL \subset T^*[n+1]\calM$, we use \textit{twist} of a homological function $\Thetan$.
\begin{Definition}
Let $\phi \in C^{\infty}(T^*[n]\calM)$.
Then the \textit{twist} ${\rm e}^{\operatorname{ad} \phi}$ of any function $\alpha \in C^{\infty}(\calM)$ by $\phi$ is defined by~\cite{Roytenberg:2001am}
\begin{align*}
{\rm e}^{\operatorname{ad} \phi} \alpha := \alpha + \{\alpha, \phi\} + \frac{1}{2}\{\{\alpha, \phi\}, \phi\}
+ \cdots + \frac{1}{n!}\{\dots\{\{\alpha, \phi\}, \phi\}, \dots, \phi \} + \cdots.
\end{align*}
\end{Definition}
If a function $\phi \in C^{\infty}(T^*[n]\calM)$ is of degree $n$, twist preserves degree and gives a Poisson map~${\alpha \mapsto {\rm e}^{\operatorname{ad} \phi} \alpha }$ satisfying
\begin{align*}
\big\{{\rm e}^{\operatorname{ad} \phi} \alpha, {\rm e}^{\operatorname{ad} \phi} \beta \big\}
= {\rm e}^{\operatorname{ad} \phi} \{\alpha, \beta \}
\end{align*}
for all $\alpha, \beta \in C^{\infty}(T^*[n]\calM)$.

Let $\calE_{\phi} := \big\{ {\rm e}^{\operatorname{ad} \phi} \proj^* f \mid f
\in C^{\infty}(\calM) \big\}$ be a space of functions twisted
by a degree $n$ function $\phi \in C^{\infty}(T^*[n]\calM)$.
In general, $\calE_{\phi}$ is identified as a space of functions on
some graded manifold
$\calL_{\phi} \subset T^*[n]\calM$, $\calE_{\phi} = C^{\infty}(\calL_{\phi})$,
by applying a Serre--Swan type theorem on a graded manifold.
\begin{Proposition}
$\calL_{\phi}$ is a graded Lagrangian submanifold of $T^*[n]\calM$.
\end{Proposition}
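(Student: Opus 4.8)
The plan is to exploit the property recorded immediately above the statement: twisting by a degree-$n$ function $\phi$ is a Poisson map. I would read ${\rm e}^{\operatorname{ad}\phi}$ as the pullback by a graded symplectomorphism $\Phi$ of $T^*[n]\calM$, and then invoke the elementary principle that a symplectomorphism carries a Lagrangian submanifold to a Lagrangian submanifold. Since $\calM$ is already known to be the trivial Lagrangian (half-dimensional, with $\{\proj^* f, \proj^* g\}=0$), its image $\calL_\phi = \Phi^{-1}(\calM)$ must again be Lagrangian, and this image is exactly the graded manifold whose functions form $\calE_\phi$.

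First I would observe that $\operatorname{ad}\phi = \{-,\phi\}$ is a derivation of the graded-commutative product, so its formal exponential ${\rm e}^{\operatorname{ad}\phi}$ is a degree-preserving algebra automorphism of $C^\infty(T^*[n]\calM)$ with inverse ${\rm e}^{-\operatorname{ad}\phi}$; together with the stated relation $\{{\rm e}^{\operatorname{ad}\phi}\alpha, {\rm e}^{\operatorname{ad}\phi}\beta\}={\rm e}^{\operatorname{ad}\phi}\{\alpha,\beta\}$ this makes it a Poisson-algebra automorphism, hence dual to a graded symplectomorphism $\Phi$ with $\Phi^* = {\rm e}^{\operatorname{ad}\phi}$ and $\calE_\phi = \Phi^*\bigl(\proj^* C^\infty(\calM)\bigr) = C^\infty(\calL_\phi)$. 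The isotropy of $\calL_\phi$ is then an immediate computation: for $f,g \in C^\infty(\calM)$,
\[
\bigl\{{\rm e}^{\operatorname{ad}\phi}\proj^* f,\, {\rm e}^{\operatorname{ad}\phi}\proj^* g\bigr\}
= {\rm e}^{\operatorname{ad}\phi}\{\proj^* f, \proj^* g\} = 0,
\]
so the generators of $\calE_\phi$ Poisson-commute. For maximality, since ${\rm e}^{\operatorname{ad}\phi}$ is a degree-preserving isomorphism, $\calL_\phi$ has the same graded dimension as $\calM$, namely half that of $T^*[n]\calM$; an isotropic submanifold of half dimension is Lagrangian, which concludes the argument.

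The hard part will be the passage from this algebra-level statement to an honest submanifold-level one: one must justify that the subalgebra $\calE_\phi$ really is the function algebra of a graded submanifold $\calL_\phi \subset T^*[n]\calM$ (the Serre--Swan-type identification invoked just before the Proposition), and that the series defining ${\rm e}^{\operatorname{ad}\phi}$ is well defined on each function. The latter is mild here, because $\phi$ has degree $n$ while the bracket has degree $-n$, so $\operatorname{ad}\phi$ is degree-preserving and the twist behaves well on functions of bounded fiber degree; this is a standard feature of the Roytenberg twist, and I would take both points from the cited references rather than reprove them. Once they are granted, the isotropy computation and the dimension count above complete the proof.
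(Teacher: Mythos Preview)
Your proof is correct and follows the same approach as the paper's own one-line argument: the paper simply records the isotropy computation $\{{\rm e}^{\operatorname{ad}\phi}\proj^* f,\,{\rm e}^{\operatorname{ad}\phi}\proj^* g\}={\rm e}^{\operatorname{ad}\phi}\{\proj^* f,\proj^* g\}=0$ together with the observation that $\calL_\phi$ has the same dimension as $\calM$. Your additional remarks about interpreting ${\rm e}^{\operatorname{ad}\phi}$ as the pullback of a graded symplectomorphism, and about the Serre--Swan and convergence caveats, are helpful context but not part of the paper's proof itself.
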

It is confirmed using the formula
$\big\{{\rm e}^{\operatorname{ad} \phi} \proj^*f, {\rm e}^{\operatorname{ad} \phi} \proj^* g\big\}
= {\rm e}^{\operatorname{ad} \phi} \{\proj^* f, \proj^* g \} =0$
and the dimension of $\calL_{\phi}$ is the same as $\calM$.

By definition, for $\alpha, \beta \in C^{\infty}(\calL_{-\phi})$,
there exist $f, g \in C^{\infty}(\calM)$ such that $\alpha = {\rm e}^{-\operatorname{ad} \phi} \proj^* f$ and $\beta = {\rm e}^{-\operatorname{ad} \phi} \proj^* g$.
The derived bracket of $\alpha$ and $\beta$ is equal to
\begin{align}
\big\{\big\{{\rm e}^{-\operatorname{ad} \phi} \proj^* f, \Thetan\big\}, {\rm e}^{-\operatorname{ad} \phi} \proj^* g\big\} &:= {\rm e}^{-\operatorname{ad} \phi} \big\{\big\{\proj^* f, {\rm e}^{\operatorname{ad} \phi} \Thetan\big\}, \proj^* g\big\}.
\label{derivedbracket02}
\end{align}
A bilinear bracket is defined by the following derived bracket,
\begin{align}
\{\alpha, \beta \}_{\calL_{-\phi}}
&:= \proj_* {\rm e}^{-\operatorname{ad} \phi} \big\{\big\{\proj^* f, {\rm e}^{\operatorname{ad} \phi} \Thetan\big\}, \proj^* g\big\}.
\label{derivedbracket03}
\end{align}
Since it is equivalent to \smash{$\proj_* \big\{\big\{{\rm e}^{-\operatorname{ad} \phi} \proj^* f, \Thetan\big\}, {\rm e}^{-\operatorname{ad} \phi} \proj^* g\big\}$}, then twisting by $\phi$ gives
a graded Poisson bracket on $\calL_{-\phi}$ induced by \smash{$\Thetan$} equivalent to
a graded Poisson bracket on $\calM$ induced by~${\rm e}^{\operatorname{ad} \phi} \Thetan$.
We can prove that $\{-, -\}_{\calL_{-\phi}}$ is a graded Poisson bracket of degree $-n+1$ similar to the proof for the bracket \eqref{derivedbracket01}.

Thus, in the preceding discussion, we analyze the twisted homological function ${\rm e}^{\operatorname{ad} \phi} \Thetan$ on $\calM$ instead of a homological function on $\calL_{-\phi}$.

The twisting function $\phi$ is an almost homological function.
By \eqref{derivedbracket02} and \eqref{derivedbracket03},
the graded Poisson bracket is defined by the derived bracket
$\big\{\big\{-, \Thetan\big\},-\big\}$.
On the other hand, we obtain
\begin{align}
&\proj_* {\rm e}^{\operatorname{ad} \phi} \Thetan\nonumber\\
&\qquad=
\proj_* \Big(\Thetan + \big\{\Thetan, \phi\big\} + \frac{1}{2}\big\{\big\{\Thetan, \phi\big\}, \phi\big\}
+ \cdots + \frac{1}{n!} \underbrace{\big\{\dots\big\{\big\{\Thetan, \phi\big\}, \phi\big\}, \dots, \phi \big\}}_{n\, \text{brackets}} + \cdots \Big).
\label{twistdQP}
\end{align}
The function $\phi$ is of degree $n$, and
from equation \eqref{derivedbracket01},
the third term of $\proj_* \, {\rm e}^{\operatorname{ad} \phi} \Theta$
gives
\begin{align}
& \{\phi, \phi \}_{\calM} = - \proj_* \big\{\big\{\Thetan, \phi\big\}, \phi\big\}.
\label{db02}
\end{align}
If equation \eqref{db02} is zero, $\phi$ is a homological function on $\calM$
and
$(\calM, \{-,-\}_{\calM}, \phi)$ is a QP-manifold of degree $n-1$.
However, equation \eqref{db02} is not necessarily zero.

Inspired by the above analysis and equation \eqref{twistdQP},
we define a \textit{twisted QP-structure}.
We consider the following class of generalizations of a QP-manifold which has good properties but it is not a QP.
\begin{Definition}
Take a shifted cotangent bundle $\calN = T^*[n]\calM$ of a graded manifold $\calM$. Let $\calN = T^*[n]\calM$ be a QP-manifold with a homological function $\Theta$.
If a degree $n$ function $\phi \in C^{\infty}(T^*[n]\calM)$ satisfies
\begin{align}
\proj_* {\rm e}^{\operatorname{ad} \phi} \Theta = 0,\label{LagQ}
\end{align}
$(\calM, \{-,-\}_{\calM}, \phi)$ is called a \textit{twisted QP-manifold},
where $\proj\colon T^*[n]\calM \rightarrow \calM$ is a natural projection.
\end{Definition}
A function $\phi$ satisfying the condition \eqref{LagQ} is also called a
a Poisson function or a canonical function.
Obviously a QP-manifold is a twisted QP-manifold.
A twisted QP-manifold has some applications to physical theories
such as the reduction of the AKSZ sigma models and the supergeometric construction of current algebras~\cite{ Bessho:2015tkk, Ikeda:2013vga, Ikeda:2013wh}.

\subsection{QP-manifold description of compatible differential forms}\label{QPcompatible}

Our graded manifold is $T^*[n]\calM = T^*[n]T^*[n-1]E[1]$.
In order to compute the concrete formula, take local coordinates $\bigl(x^i, q^a, z_i, y_a\bigr)$ of degree $(0, 1, n-1, n-2)$
on $T^*[n-1]E[1]$ and canonical conjugate coordinates of the fiber
$\bigl(\xi_i, p_a, \zeta^i, \eta^a\bigr)$ of degree $(n, n-1, 1, 2)$.
The canonical projection~${\proj\colon T^*[n]\calM \rightarrow \calM}$ is given by
$\proj\colon \bigl(\xi_i, p_a, \zeta^i, \eta_a\bigr) \mapsto (0,0,0,0)$.
The canonical symplectic form of degree $n$ is
\begin{align*}
\ggomega &= \delta x^i \wedge \delta \xi_i + \delta q^a \wedge \delta p_a
+ \delta z_i \wedge \delta \zeta^i + \delta y_a \wedge \delta \eta^a.
\label{ggomega}
\end{align*}
Graded Poisson brackets induced from the above symplectic form are
\begin{gather*}
\big\{x^i, \xi_j \big\} = - \big\{\xi_j, x^i \big\} = \delta^i_j,
\qquad
\{q^a, p_b \} = - (-1)^{n-1} \{p_b, q^a \} = \delta^a_b,
\\
\big\{z_i, \zeta^j \big\} = - (-1)^{n-1} \big\{\zeta^j, z_i \big\} = \delta_i^j,
\qquad
\big\{y_a, \eta^b \big\} = - \big\{\eta^b, y_a \big\} = \delta_a^b.
\end{gather*}
We take an $(n+1)$-form $H$ on $M$ and
consider the following function of degree $n+1$,
\begin{align*}
\Thetan &= \bracket{\xi}{\zeta} + \inner{p}{\eta} + (-1)^{\tfrac{n(n-1)}{2}}
\tilde{H}
 \\ &=
\xi_i \zeta^i + p_a \eta^a + (-1)^{\tfrac{n(n-1)}{2}} \frac{1}{(n+1)!} H_{i_1 \dots i_{n+1}}(x) \zeta^{i_1} \cdots \zeta^{i_{n+1}},
%\label{thetan}
\end{align*}
where
\smash{$\tilde{H} = \frac{1}{(n+1)!} H_{i_1 \dots i_{n+1}}(x)
\zeta^{i_1} \cdots \zeta^{i_{n+1}}$} is a degree $n+1$ function corresponding to
the $(n+1)$-form \smash{$H = \frac{1}{(n+1)!} H_{i_1 \dots i_{n+1}}(x) \rd x^{i_1} \wedge \cdots \wedge \rd x^{i_{n+1}} \in \Omega^{n+1}(M)$}.
An $(n+1)$-form is identified to a~degree $n+1$ function on $T[1]M$.
$\Thetan$ satisfies \smash{$\big\{\Thetan,\Thetan\big\}=0$} if and only if $H$ is closed, $\rd H=0$,
i.e.,
\smash{$(T^*[n]\calM, \ggomega, \tQ)$} is a QP-manifold if $H$ is a pre-$n$-plectic form on $M$,
where $\tQ$ is a Hamiltonian vector field for $\Thetan$.

We consider a twist using the following function of degree $n$,
$\phi= \phiL - \uJ$, where two functions are
\begin{align}
&\phiL = \rho^i_a(x) z_i q^a + \frac{1}{2} C^c_{ab}(x) q^a q^b y_c,
%= \rho^i_a(x) z_i q^a + \frac{(-1)^{n+1}}{2} C^c_{ab}(x) q^a q^b y_c,
\label{twist1}
\\
&\uJ = \frac{1}{n!} J_{a_1\dots a_n}(x) q^{a_1}\cdots q^{a_n}.
\label{twist2}
\end{align}
and impose the condition \eqref{LagQ}, $\proj_* {\rm e}^{\operatorname{ad} \phi} \Thetan = 0$.
$\phiL$ is a similar form as $\Theta_{\rm Lie}$ in equation~\eqref{LAHamiltonian}.

Compare to the condition in non-graded geometry
by defining the following operations.
Let~${\rho := \rho^i_a(x) \tfrac{\partial}{\partial x^i}\colon E \rightarrow TM}$ be a bundle map,
$[e_a, e_b] := C_{ab}^c e_c$ be a bilinear bracket on $\Gamma(E)$
for the basis $e_a \in \Gamma(E)$.
Moreover let $J := \frac{1}{n!} J_{a_1\dots a_n}(x) {\rm e}^{a_1} \wedge \cdots \wedge {\rm e}^{a_n} \in \Gamma(\wedge^n E^*)$
be an $E$-$n$-form with the basis ${\rm e}^a \in \Gamma(E^*)$.

By structures of coordinates, $z_i$, $q^a$ and $y_a$,
only nonvanishing terms in $\proj_* {\rm e}^{\operatorname{ad} \phi} \Thetan$
after the projection $\proj$ are following four terms,
\begin{gather*}
\big\{\big\{\Thetan, \phiL \big\}, \phiL \big\},\quad
\big\{\big\{\Thetan, \uJ \big\}, \phiL \big\},\quad \big\{\big\{\Thetan, \phiL \big\}, \uJ \big\}
,\quad
\underbrace{\big\{\dots\big\{\big\{\Thetan, \phiL \big\}, \phiL \big\}, \dots, \phiL
\big\}}_{(n+1) \text{brackets}}.
\end{gather*}
At first, \smash{$\big\{\big\{\Thetan, \phiL \big\}, \phiL \big\}=0$} is satisfied independently.
This equation gives conditions for $\rho^i_a$ and $C^a_{bc}$ that
$\rho_a = \rho^i_a \partial_i$ is an anchor map and $C^a_{bc}$
is a structure function of the Lie algebroid on~$E$.
Another condition
\begin{gather}
\proj_* \bigg( \frac{1}{2} \big\{\big\{\Thetan, \uJ \big\}, \phiL \big\}
+ \frac{1}{2} \big\{\big\{\Thetan, \phiL \big\}, \uJ \big\}\nonumber\\
\qquad
{}+ \frac{1}{(n+1)!}
\underbrace{\{\dots\{\{\Theta, \phiL \}, \phiL \}, \dots, \phiL \}}_{(n+1)\, \text{brackets}} \bigg) =0
\label{twisteQP01}
\end{gather}
gives the compatible condition of $J$, equation \eqref{momsec1}.
In fact, the first two terms in \eqref{twisteQP01} is equivalent to
${}^E \rd J$ and the third term is $\iota_{\rho}^{n+1} H$
by straightforward calculations.

From these analysis, we obtain the following result.
\begin{Proposition}
$(E, \rho, [-,-])$ is a Lie algebroid and $J$ is a compatible $E$-$n$-form
with a pre-$(n+1)$-form $H$
if and only if $\phi$ satisfies equation \eqref{LagQ}
$\proj_* {\rm e}^{\operatorname{ad} \phi} \Thetan = 0$, i.e., $\calM$ is a \textit{twisted QP-manifold} with $\phi$ in equations \eqref{twist1} and \eqref{twist2}.
\end{Proposition}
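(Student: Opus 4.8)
The plan is to substitute $\phi=\phiL-\uJ$ of \eqref{twist1}--\eqref{twist2} into the Taylor expansion $\proj_* {\rm e}^{\operatorname{ad}\phi}\Thetan = \proj_*\sum_{k\ge 0}\frac{1}{k!}(\operatorname{ad}\phi)^k\Thetan$ and to isolate the finitely many summands that survive the projection $\proj$. Since $\proj$ sets the fiber momenta $\xi_i,p_a,\zeta^i,\eta^a$ to zero, a monomial survives only if every such momentum has been contracted. Assigning to each monomial its momentum weight $w$, the number of factors among $\xi,p,\zeta,\eta$, one sees that $\Thetan$ has a kinetic part $\xi_i\zeta^i+p_a\eta^a$ of weight $2$ and a $\tilde H$-part of weight $n+1$, while $\phiL$ and $\uJ$ have weight $0$. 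Because $\phi$ carries no momenta, each nonzero bracket against $\phi$ pairs one momentum of the evolving expression with its conjugate base coordinate in $\phi$ and therefore lowers $w$ by exactly one. Survival thus forces precisely $w$ brackets on each component, which singles out the four families in \eqref{twisteQP01}: the double bracket $\{\{\Thetan,\phiL\},\phiL\}$ and the two mixed double brackets $\{\{\Thetan,\uJ\},\phiL\}$, $\{\{\Thetan,\phiL\},\uJ\}$ acting on the kinetic part, and the $(n+1)$-fold bracket $\{\cdots\{\Thetan,\phiL\},\dots,\phiL\}$ acting on $\tilde H$. All products containing two factors $\uJ$ either vanish identically or retain an uncontracted momentum, hence drop out under $\proj$.

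Next I would argue that the resulting equation splits into \emph{independent} conditions because the surviving structures occupy disjoint monomial types of the degree-$(n+1)$ function on $\calM=T^*[n-1]E[1]$. A direct evaluation shows $\{\{\Thetan,\phiL\},\phiL\}$ produces only monomials of the form $z_iq^aq^b$ and $q^aq^bq^cy_d$, whereas the mixed and $(n+1)$-fold terms produce pure monomials $q^{a_1}\cdots q^{a_{n+1}}$. Consequently the vanishing of the whole expression is equivalent to the separate vanishing of each block. For the first block, $\proj_*\{\{\Thetan,\phiL\},\phiL\}=0$ says exactly that $\phiL$ is a homological Hamiltonian for the derived bracket $\{\{-,\Thetan\},-\}$; this is the same computation that identified $\{\Theta_{\rm Lie},\Theta_{\rm Lie}\}=0$ with the Lie algebroid axioms for \eqref{LAHamiltonian}, the $z_iq^aq^b$-coefficient giving $\rho([e_a,e_b])=[\rho(e_a),\rho(e_b)]$ and the $q^aq^bq^cy_d$-coefficient giving the Jacobi identity, the Leibniz rule being automatic. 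Hence this block is equivalent to $(E,\rho,[-,-])$ being a Lie algebroid.

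It remains to evaluate the pure-$q$ block. Here I would use that the derived bracket by $\phiL$ reproduces the homological vector field $\QLie$, so that the symmetrized mixed terms $\proj_*\bigl(\{\{\Thetan,\uJ\},\phiL\}+\{\{\Thetan,\phiL\},\uJ\}\bigr)$ equal the degree-$(n+1)$ function representing $\rde J$, while each bracket $\{-,\phiL\}$ against $\tilde H$ replaces a $\zeta^{i}$ by $\rho^i_aq^a$, so the $(n+1)$-fold term collapses to $H_{i_1\dots i_{n+1}}\rho^{i_1}_{a_1}\cdots\rho^{i_{n+1}}_{a_{n+1}}q^{a_1}\cdots q^{a_{n+1}}$, the function representing $\iota_{\rho}^{n+1}H$. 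Comparing coefficients, the pure-$q$ block vanishes precisely when $\rde J=-\iota_{\rho}^{n+1}H$, i.e.\ the compatibility condition \eqref{momsec1} with $\omega=H$. Combining the two blocks gives the asserted equivalence, and by definition this is exactly the statement that $\calM$ carries a twisted QP-structure with $\phi$, proving \eqref{LagQ}.

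I expect the main obstacle to be the sign and normalization bookkeeping: tracking the graded brackets $\{q^a,p_b\}=-(-1)^{n-1}\{p_b,q^a\}$ and their companions together with the prefactor $(-1)^{n(n-1)/2}$ in $\Thetan$ and the $\frac{1}{k!}$ from the exponential, so that the two mixed double brackets assemble into $\rde J$ with exactly the sign dictated by the $(n+1)$-fold term $\iota_{\rho}^{n+1}H$. The weight-counting and the monomial-type separation are robust, but verifying that $\{\{\Thetan,\phiL\},\phiL\}$ hides no pure-$q$ contribution, and treating the low-degree case $n=1$ where the kinetic and $\tilde H$ parts can overlap, is where care is needed.
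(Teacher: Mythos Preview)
Your proposal is correct and follows essentially the same route as the paper: the paper likewise isolates exactly the four surviving terms $\{\{\Thetan,\phiL\},\phiL\}$, $\{\{\Thetan,\uJ\},\phiL\}$, $\{\{\Thetan,\phiL\},\uJ\}$, and the $(n{+}1)$-fold bracket on $\tilde H$, then identifies the first with the Lie algebroid axioms and the remaining three with ${}^E\rd J+\iota_{\rho}^{n+1}H=0$. Your momentum-weight counting and monomial-type separation make explicit what the paper compresses into the phrase ``by structures of coordinates'' and ``straightforward calculations'', but the argument is the same.
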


\subsection*{Acknowledgments}

The author is grateful to the Erwin Schr\"odinger International Institute for Mathematics and Physics for support within the program ``Higher Structures and Field Theory'' in 2022, and National Center for Theoretical Sciences and National Tsing Hua University, where part of this work was carried, for their hospitality. This work was supported by JSPS Grants-in-Aid for Scientific Research Number 22K03323.
I would like to thank to Hsuan-Yi Liao, Camille Laurent-Gengoux and Seokbong Seol for his hospitality and useful discussion. Especially, he would like to thank to the anonymous referees for relevant contribution to improve the paper.

\pdfbookmark[1]{References}{ref}
\LastPageEnding

\end{document}